\documentclass[11pt]{amsart}
\usepackage{amssymb,amscd,  latexsym, graphicx, mathrsfs, enumerate}

\setlength{\textwidth}{460pt} \setlength{\hoffset}{-45pt}

\setlength{\textwidth}{15.5cm}
\setlength{\textheight}{22cm}
\setlength{\topmargin}{0cm}

\newcommand{\nc}{\newcommand}
\numberwithin{equation}{section}
\newtheorem{theorem}{Theorem}[section]
\newtheorem{prop}[theorem]{Proposition}
\newtheorem{importnota}[theorem]{Important Notation}
\newtheorem{prblm}[theorem]{Problem}
\newtheorem{notation}[theorem]{Notation}
\newtheorem{caution}[theorem]{Caution}
\newtheorem{remark}[theorem]{Remark}
\newtheorem{lemma}[theorem]{Lemma}
\newtheorem{construction}[theorem]{Construction}
\newtheorem{corollary}[theorem]{Corollary}
\newtheorem{example}[theorem]{Example}
\newtheorem{conclusion}[theorem]{Conclusion}
\newtheorem{triviality}[theorem]{Triviality}
\newtheorem{proto}[theorem]{Prototype Quasifibration}
\newtheorem{cauex}[theorem]{Cautionary Example}
\newtheorem{propositiondef}[theorem]{Proposition-Definition}
\newtheorem{subth}{Nuisance}[theorem]
\newtheorem{ssubth}{ }[subth]
\newtheorem{conjecture}[theorem]{Conjecture}
\newtheorem{sidest}[theorem]{Side Story}
\newtheorem{miniexample}[theorem]{Example}
\theoremstyle{definition}
\newtheorem{defin}[theorem]{Definition}

\nc\tri[1]{\begin{triviality}}
\nc\side[1]{\begin{sidest}}
\nc\conj[1]{\begin{conjecture}}
\nc\prodef[1]{\begin{propositiondef}}
\nc\prt[1]{\begin{proto}}
\nc\lem[1]{\begin{lemma}}
\nc\sblm[1]{\begin{sublemma}}
\nc\pro[1]{\begin{prop}}
\nc\thm[1]{\begin{theorem}}
\nc\cor[1]{\begin{corollary}}
\nc\dfn[1]{\begin{defin}}
\nc\sthm[1]{\begin{subth}}
\nc\exm[1]{\begin{example}}
\nc\miniexm[1]{\begin{miniexample}}
\nc\plm[1]{\begin{prblm}}
\nc\rmk[1]{\begin{remark}}
\nc\subrmk[1]{\begin{subremark}}
\nc\ntn[1]{\begin{notation}}
\nc\cau[1]{\begin{caution}}
\nc\imn[1]{\begin{importnota}}
\nc\cax[1]{\begin{cauex}}
\nc\con[1]{\begin{construction}}
\nc\ssthm[1]{\begin{ssubth}}
\nc\cnc[1]{\begin{conclusion}}
\nc\elem{\end{lemma}}
\nc\esblm{\end{sublemma}}
\nc\eside{\end{sidest}}
\nc\econj{\end{conjecture}}
\nc\eprodef{\end{propositiondef}}
\nc\eprt{\end{proto}}
\nc\ethm{\end{theorem}}
\nc\ecor{\end{corollary}}
\nc\edfn{\end{defin}}
\nc\esthm{\end{subth}}
\nc\epro{\end{prop}}
\nc\etri{\end{triviality}}
\nc\eexm{\end{example}}
\nc\eminiexm{\end{miniexample}}
\nc\ermk{\end{remark}}
\nc\subermk{\end{subremark}}
\nc\eplm{\end{prblm}}
\nc\ecau{\end{caution}}
\nc\ecax{\end{cauex}}
\nc\eimn{\end{importnota}}
\nc\entn{\end{notation}}
\nc\econ{\end{construction}}
\nc\ecnc{\end{conclusion}}
\nc\essthm{\end{ssubth}}

\newcommand{\C}{\mathbb{C}}
\newcommand{\R}{\mathbb{R}}
\newcommand{\Q}{\mathbb{Q}}

\newcommand{\Z}{\mathbb{Z}}

\newcommand{\A}{\mathbb{A}}
\newcommand{\E}{\mathcal{E}}
\renewcommand{\O}{\mathcal{O}}

\newcommand{\G}{\Gamma}

\newcommand{\ds}{\displaystyle}

\newcommand{\tr}{{\rm Tr}}

\newcommand{\lra}{\longrightarrow}
\newcommand{\bs}{\backslash}

\newcommand{\D}{\mathfrak{D}}
\renewcommand{\Bbb}{\mathbb}

\title[Ikeda type construction of cusp forms]
{Ikeda type construction of cusp forms}
\author{Henry H. Kim and Takuya Yamauchi}
\keywords{Ikeda type lift, Eisenstein series, Langlands functoriality}
\thanks{The first author is partially supported by NSERC. The second author
is partially supported by JSPS Grant-in-Aid for Scientific Research (C) No.15K04787.}
\subjclass[2010]{}
\address{Henry H. Kim \\
Department of mathematics \\
 University of Toronto \\
Toronto, Ontario M5S 2E4, CANADA \\
and Korea Institute for Advanced Study, Seoul, KOREA}
\email{henrykim@math.toronto.edu}

\address{Takuya Yamauchi \\
Department of mathematics, Faculty of Education\\
Kagoshima University\\
Korimoto 1-20-6 Kagoshima 890-0065, JAPAN}
\email{yamauchi@edu.kagoshima-u.ac.jp}

\begin{document}
\begin{abstract}
This is a survey of results on the construction of holomorphic cusp forms on tube domains
originally initiated by Ikeda \cite{Ik1}. Besides a survey it includes conjectures and possible applications of our work \cite{KY}.
\end{abstract}
\maketitle

\section{Introduction}

There are five simple tube domains (cf. \cite{G}). They are of the form $\mathfrak D=\{Z=X+iY | X\in \Bbb R^n, Y\in C\}$, where $C$ is a self-adjoint homogeneous cone in $\Bbb R^n$. Let $G$ be (the real points of) the simply connected, simple real algebraic group which acts transitively on $\mathfrak D$. We list the group $G$ and the cone $C$:
\begin{enumerate}
\item $Sp_{2n}$ (rank $n$); $n\times n$ positive definite matrices over $\Bbb R$;
\item $SU(n,n)$; $n\times n$ positive definite hermitian matrices over $\Bbb C$;
\item $SU(2n,H)=Spin^*(4n)$; $n\times n$ positive definite hermitian matrices over $H$ (quaternions);
\item $SO(2,n)^0$; the cone in $\Bbb R^{n+1}$ of $(x_0,...,x_n)$ with $x_0> (x_1^2+\cdots+x_n^2)^{\frac 12}$;
\item $E_{7,3}$; $3\times 3$ positive definite hermitian matrices over $\frak C$ (Cayley numbers).
\end{enumerate}

It is an important problem to explicitly construct holomorphic cusp forms on $\D$ with respect to $G(\Z)$
(we will call such a modular form on $\D$ ``a level one form"). In particular, we focus on the lifting from normalized Hecke cusp eigenforms on the complex upper half-plane $\mathbb{H}$ with respect to $SL_2(\Z)$ to holomorphic cusp forms on $\D$.

Ikeda \cite{Ik1} (see also \cite{Ik0}) gave a (functorial) construction of Siegel cusp forms of weight
$n+k,\ n\equiv
k\mod 2$  (so that $n+k$ is even)
for $Sp_{4n}$ from normalized Hecke eigenforms in $S_{2k}(SL_2(\Z))$ which has been conjectured by Duke and Imamoglu
(Independently Ibukiyama formulated a conjecture in terms of Koecher-Maass series). He made use of the uniform property of the Fourier
coefficients of Siegel Eisenstein series for $Sp_{4n}$ and together with various deep facts established in \cite{Ik1}
to prove Duke-Imamoglu conjecture. When $n=1$, it is nothing but a Saito-Kurokawa lift. Since then, his construction was generalized to unitary groups $U(n,n)(K/\Q)$ or $SU(n,n)$
for an imaginary quadratic field $K/\Q$ (\cite{Ik2}),
quaternion unitary groups $SU(2n,H)$ for a definite quaternion algebra $H$ over $\Q$ (\cite{Yamana}), symplectic groups $Sp_{2n}$ over totally real fields (\cite{Ik4},\cite{Ik&H}
including some levels), and the exceptional group of type $E_{7,3}$ with $\Q$-rank 3 \cite{KY}.

In this note we explain main ideas of Ikeda and how they generalize to above cases.
We do not discuss a further development by Ikeda \cite{Ik4} though it is important because his new
ideas will work beyond ``level one" case. We can give a uniform treatment except the case (4), which we will omit since it has been studied thoroughly by Oda \cite{oda} and Sugano \cite{sugano}.

Let $G$ be $Sp_{4n}$, $SU_{2n+1}:=SU(2n+1,2n+1)(K/\Q)$ (to ease the notation, we restrict ourselves to this case),
$SU(2n,H)$, or $E_{7,3}$, and $P=MN$ the Siegel parabolic subgroup of $G$
with the Levi subgroup $M$ and the abelian unipotent radical $N$.
For any ring $R$, let $\tr_G:N(R)\lra R$ be the trace on $N$, which is defined as:
$$\tr_G(n(B)):=\left\{
\begin{array}{ll}
\tr(B) &\ {\rm if}\ G=Sp_{4n}, N=\Bigg\{n(B)=\left(
\begin{array}{cc}
1_{2n} & B \\
0_{2n} & 1_{2n}
\end{array}\right) \ \Bigg|\ {}^tB=B  \Bigg\} \\
\frac{1}{2}\tr(B+\overline{B}) &\ {\rm if}\ G=SU_{2n+1},
N=\Bigg\{n(B):=\left(
\begin{array}{cc}
1_{2n+1} & B \\
0_{2n+1} & 1_{2n+1}
\end{array}\right)\ \Bigg|\ {}^t {\bar B}=B  \Bigg\} \\
\frac{1}{2}\tr(B+\tau(B)) &\ {\rm if}\ G=SU(2n,H),
N=\Bigg\{n(B):=\left(
\begin{array}{cc}
1_{n} & B \\
0_{n} & 1_{n}
\end{array}\right)\ \Bigg|\ {}^t ({}^\iota B)=B  \Bigg\},
\end{array}
\right.
$$
where ${}^\iota x=x_0-ix_1-jx_2-kx_3$ for $x=x_0+ix_1+jx_2+kx_3\in H$, and $\tau(x)=x+{}^\iota x$.
\newline For $E_{7,3}$, see \cite{KY}.

Set $K=\Q$ if $G=Sp_{4n}$ or $E_{7,3}$, and $K=\Bbb H$ if $G=SU(2n,H)$.
Let $\O$ be the ring of integers of $K$ if $G\ne SU(2n,H)$, and
a maximal order of $H$ if $G=SU(2n,H)$. An element $T$ of $N(K)$ is semi-integral if
$\tr_G(TX)\in \Z$ for any $N(\O)$. We denote by $L$ the set of all semi-integral elements in $N(K)$ and
denote by $L^+$ the subset of $L$ consisting of positive definite elements.
Here the positivity has the usual meaning as matrices for $G\not=E_{7,3}$, and see \cite{KY} for $E_{7,3}$.
For instance, if $G=Sp_{4n}$, $L$ consists of matrices $(x_{ij})_{1\le i,j\le 2n}$ so that $x_{ii}\in \Z$ and
$x_{ij}=x_{ji}\in \frac{1}{2}\Z$ for $i\not=j$.

For the integers $k$ and $d$, we denote by $\mathfrak{d}_d$ the discriminant of $\Q\left(\sqrt{(-1)^k d}\right)/\Q$ and
$\chi_d$ the Dirichlet character associated to $\Q\left(\sqrt{(-1)^k d}\right)/\Q$. Let $\mathfrak{f}_d$ be the positive
rational number so that $d=\mathfrak{d}_d\mathfrak{f}^2_d$. Let $L(s,\chi_d)$ be the Dirichlet L-function of $\chi_d$.
For $T\in L^+$, put $D_T=\det(2T)$ (resp. $\gamma(T)=(-D_{K})^n\det(T)$ where $-D_{K}$ stands for the
fundamental discriminant of $K/\Q$) if $G=Sp_{4n}$ (resp. if
$G=SU_{2n+1}$). For $G=SU(2n,H)$, put $D_T=(D_H)^{\frac{n}{2}}{\rm Paf}(T)$
where $D_H$ is the product of rational primes $p$ so that $H\otimes_\Q\Q_p$ is a skew field and
Paf is defined in Section 1 of \cite{Yamana}.
When $G=E_{7,3}$, $\det(T)$ is as in \cite{KY}.  Set
$$\ell(k):=\left\{
\begin{array}{ll}
k+n &\ {\rm if}\ G=Sp_{4n},\\
2k+2n &\ {\rm if}\ G=SU_{2n+1},\\
2k+2n-2 &\ {\rm if}\ G=SU(2n,H),\\
2k+8 &\ {\rm if}\ G=E_{7,3}.
\end{array}
\right.
$$
For each $\gamma\in G(\R)$ and $Z\in \D$, one can associate the
automorphic factor $j(\gamma,Z)\in \C$ so that $j(\gamma,Z)^k$ is used to define modular forms on $\D$ of weight $k$
for any integer $k\ge 0$. For example, if $\gamma=\left(
\begin{array}{cc}
A & B \\
C & D
\end{array}\right)\in Sp_{2n}(\R)$, then $j(\gamma,Z)=\det(CZ+D)$.
Put $\G:=G(\Z)$ and $\G_\infty=\G\cap N(\Q)$.
Let us consider the Siegel Eisenstein series
of weight $\ell(k)$:
$$E_{\ell(k)}(Z)=
\ds\sum_{\gamma\in \Gamma_\infty\bs\G}j(\gamma,Z)^{-\ell(k)}.
$$
Then we have the Fourier expansion

$$\E_{\ell(k)}(Z)=\frac{1}{C(\ell(k))}E_{\ell(k)}(Z)=\sum_{T\in L} A(T)\exp(2\pi\sqrt{-1}\cdot \tr_G(TZ)),
$$
for a constant $C(\ell(k))$, and for $T\in L^+$, $A(T)$ is given as follows:
$$
A(T)=\left\{
\begin{array}{ll}
L(1-k,\chi_{D_T})\mathfrak{f}^{k-\frac{1}{2}}_T\ds\prod_{p|\mathfrak{d}_T}\widetilde{F}_{p}(T;p^{k-\frac{1}{2}})  &\ {\rm if}\ G=Sp_{4n} \\
|\gamma(T)|^{k-\frac{1}{2}}\ds\prod_{p|\gamma(T)}\widetilde{F}_{p}(T;p^{k-\frac{1}{2}})  &\ {\rm if}\ G=SU_{2n+1} \\
{D_T}^{k-\frac{1}{2}}\ds\prod_{p|D_T}\widetilde{f}_{p,T}(p^{k-\frac{1}{2}})&\ {\rm if}\ G=SU(2n,H) \\
\det(T)^{k-\frac{1}{2}}\ds\prod_{p|\det(T)} \widetilde{f}^p_T(p^{k-\frac{1}{2}})&\ {\rm if}\ G=E_{7,3},
\end{array}\right.
$$
where $\widetilde{F}_{p}(T;X), \widetilde{f}_{p,T}(X)$ and $ \widetilde{f}^p_T(X)$ are Laurent polynomials over $\Q$ with one variable $X$
which are depending only on
$T,p$ and both are identically 1 for all but finitely many $p$. Introducing
multi-variables $\{X_p\}_p$ indexed by rational primes $p$, we may consider
$$
A(\{X_p\}_p):=\left\{
\begin{array}{ll}
L(1-k,\chi_{D_T})\mathfrak{f}^{k-\frac{1}{2}}_T\ds\prod_{p|\mathfrak{d}_T}\widetilde{F}_{p}(T;X_p)  &\ {\rm if}\ G=Sp_{4n} \\
|\gamma(T)|^{k-\frac{1}{2}}\ds\prod_{p|\gamma(T)}\widetilde{F}_{p}(T;X_p)  &\ {\rm if}\ G=SU_{2n+1} \\
{D_T}^{k-\frac{1}{2}}\ds\prod_{p|D_T}\widetilde{f}_{p,T}(X_p)&\ {\rm if}\ G=SU(2n,H) \\
\det(T)^{k-\frac{1}{2}}\ds\prod_{p|\det(T)} \widetilde{f}^p_T(X_p)&\ {\rm if}\ G=E_{7,3}.
\end{array}\right.
$$
Then $A(\{X_p\}_p)$ can be regarded as
an element of $\otimes'_p\C[X_p,X^{-1}_p]$.
For each normalized Hecke eigenform $f=\ds\sum_{n=1}^\infty a(n)q^n,\ q=\exp(2\pi \sqrt{-1}\tau),\ \tau\in \mathbb{H}$ in $S_{2k}(SL_2(\Z))$ and each rational prime $p$,
we define the Satake $p$-parameter $\alpha_p$ by $a(p)=p^{k-\frac{1}{2}}(\alpha_p+\alpha^{-1}_p)$.
For such $f$, consider the following formal series on $\D$:
$$F_f(Z):=\sum_{T\in L^+} A_{F_f}(T)\exp(2\pi\sqrt{-1}\tr_G(TZ)),\ Z\in \frak D, \quad
A_{F_f}(T)=A(\{\alpha_p\}_p).
$$
Then
\begin{theorem}\label{main-thm1}
Assume that $H$ is the Hurwitz quaternion when $G=SU(2n,H)$. Then
$F_f$ is a non-zero Hecke eigen cusp form on $\D$ of weight $\ell(k)$ with
respect to $G(\Z)$.
\end{theorem}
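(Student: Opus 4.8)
The statement bundles four claims: that $F_f$ is (i) a holomorphic automorphic form of weight $\ell(k)$ for $\G=G(\Z)$, (ii) a cusp form, (iii) a Hecke eigenform, and (iv) nonzero. The weight of the proof is in (i). The guiding observation is the one made explicit above: the coefficient system $A(\{X_p\}_p)\in\otimes'_p\C[X_p,X_p^{-1}]$ is a single universal object whose specialization $X_p\mapsto p^{k-\frac12}$ returns the Fourier coefficients of the normalized Siegel Eisenstein series $\E_{\ell(k)}$ -- which is modular by the general theory of Eisenstein series -- while the specialization $X_p\mapsto\alpha_p$ returns $F_f$. The local Siegel series $\widetilde F_p(T;X)$ (and their analogues $\widetilde f_{p,T}$, $\widetilde f^p_T$) entering both specializations carry, for formal reasons, functional equations in $X\mapsto X^{-1}$ and recursions in the elementary divisors (``content'') of $T$; the plan is to show that these structural properties, which encode the modularity of $\E_{\ell(k)}$ on the level of Fourier coefficients, continue to hold after the substitution $X_p=\alpha_p$, and hence force $F_f$ to be modular as well.

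\emph{Modularity.} Since each $\widetilde F_p(T;X_p)$ and each of $D_T,\mathfrak f_T,\gamma(T),\det T$ depends only on the $\mathrm{GL}$-equivalence class of $T$, the formal series $F_f$ is holomorphic on $\D$ and invariant under $\G_\infty=\G\cap N(\Q)$ and under the arithmetic part of the Levi $M$; the rank being at least two, the Koecher principle imposes no further growth condition. What remains is invariance under the Weyl element that swaps $N$ with the opposite unipotent. To obtain it, expand $F_f$ in a Fourier--Jacobi series $F_f=\sum_{m}\phi_m$ along a codimension-one rational boundary component of $\D$. Using the explicit shape and functional equation of the Siegel series, one checks that the higher coefficients $\phi_m$ are produced from the first coefficient $\phi_1$ by index-raising (Hecke) operators on Jacobi forms, and that these operators are compatible with the would-be $G$-automorphy; this reduces the modularity of $F_f$ to the single assertion that $\phi_1$ is an honest Jacobi form. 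The latter is the point where Ikeda's ``deep facts'' -- here the explicit evaluation of the relevant Siegel series together with the Shimura/Kohnen correspondence attaching to $f$ a half-integral (or metaplectic) form -- are used, uniformly in the cases $G=Sp_{4n},SU_{2n+1},SU(2n,H),E_{7,3}$ (in the quaternionic case the hypothesis that $H$ is the Hurwitz order keeps the local and class-number bookkeeping in line; for $E_{7,3}$ one uses the Jordan-algebra computations of \cite{KY}). One may instead run this as an induction on the rank $n$, identifying $\phi_m$ with an Ikeda-type lift of $f$ to the smaller group of the same type, times an elementary factor, the cases $n=0,1$ being classical. Either way, $F_f$ is a holomorphic modular form on $\D$ of weight $\ell(k)$ for $\G$.

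\emph{Cuspidality, Hecke eigenform property, non-vanishing.} As $A_{F_f}(T)=0$ unless $T\in L^+$, every Fourier coefficient of $F_f$ is indexed by a positive definite (hence full rank) $T$, so the Siegel $\Phi$-operator annihilates $F_f$; applying an arbitrary $\gamma\in G(\Q)$ first -- legitimate now that modularity is known -- the same computation kills all constant terms along all rational boundary components, so $F_f$ is a cusp form. For the Hecke eigenform property, note that $A(\{X_p\}_p)$ is assembled prime by prime from a single local factor in the variable $X_p$; after $X_p=\alpha_p$ the $p$-part of the coefficients is governed by the local $L$-factor of $f$ at $p$ (the expected standard/spin transfer, cf.\ the references), and the eigenvalue relations of the $\G$-Hecke algebra at $p$ on Fourier coefficients, of Rankin--Sugano type, can be read off directly from the explicit $A_{F_f}(T)$; hence $F_f$ is a Hecke eigenform. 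Finally, pick $T_0\in L^+$ with $D_{T_0}$ (resp.\ $\gamma(T_0)$, etc.) fundamental; then every local factor $\widetilde F_p(T_0;\cdot)$ equals $1$, so $A_{F_f}(T_0)$ collapses to a Dirichlet $L$-value, equivalently a Fourier coefficient of the half-integral weight form attached to $f$, which is nonzero. Therefore $F_f\not\equiv 0$.

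\emph{The main obstacle.} The hard part is (i), and inside it the verification that $\phi_1$ is a Jacobi form and that the $\phi_m$ are coherently built from it: this is exactly where the uniform, explicit structure of the Fourier coefficients of the Siegel Eisenstein series -- functional equations and content-recursions of the Siegel series -- is indispensable, and where the four groups $Sp_{4n}$, $SU_{2n+1}$, $SU(2n,H)$, $E_{7,3}$ each demand their own computation (the exceptional case being the most laborious). Once the explicit coefficient formula is in hand, the cuspidality, eigenform, and non-vanishing assertions are essentially formal.
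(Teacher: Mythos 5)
Your overall outline --- convergence, invariance under $P(\Z)$ read off from the shape of the coefficients, reduction of the remaining invariance to the statement that each Fourier--Jacobi coefficient $F_S$ is a Jacobi form, and the theta decomposition $F_S=\sum_\xi F_{S,\xi}\theta^S_{\varphi_\xi}$ reducing that to modularity of one-variable components --- matches the paper. But at the decisive step you substitute a mechanism that is not the one used and that does not work in general: you claim the higher Fourier--Jacobi coefficients $\phi_m$ are obtained from $\phi_1$ by index-raising operators, so that everything reduces to $\phi_1$ being an honest Jacobi form. That is the Maass Spezialschar argument for the Saito--Kurokawa case $n=1$. For general $n$, and a fortiori for $SU_{2n+1}$, $SU(2n,H)$ and $E_{7,3}$, the index $S$ ranges over all of $L'_+$ (positive definite semi-integral matrices, resp.\ Jordan algebra elements); there is no distinguished ``first'' index from which the others are generated by index-raising, and no such reduction is available. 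Avoiding exactly this reduction is the point of Ikeda's method.

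The paper's actual mechanism, absent from your proposal, has two ingredients. First (Theorem \ref{eisen}, due to B\"ocherer and Ikeda): for \emph{every} $S\in L'_+$ and every large even $k'$, the $(S,\xi)$-component of the Siegel Eisenstein series $\mathcal E_{\ell(k')}$ is itself an elliptic Eisenstein series of weight $k'-\frac12\dim X$ on a congruence subgroup independent of $k'$; since its Fourier coefficients are the specializations $\Phi_N(\{p^{(k'-1)/2}\}_p)$ of universal Laurent polynomials, the collection over $k'$ is a ``compatible family of Eisenstein series'' in the sense of Definition \ref{family}. Second (Theorem \ref{ikeda-lem}, Lemma 10.2 of \cite{Ik2}): an interpolation lemma asserting that if the specializations at $\{p^{(k'-1)/2}\}_p$ are vector-valued modular of type $u_S$ for all large $k'$, then so is the specialization at $\{\alpha_p\}_p$. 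This lemma is precisely what justifies your phrase that the modularity ``continues to hold after the substitution $X_p=\alpha_p$''; without it that substitution is unproved, and it is the heart of the theorem. Two smaller points: your non-vanishing argument is fine for $G\neq Sp_{4n}$, but for $Sp_{4n}$ the relevant coefficient at fundamental discriminant is a Fourier coefficient of the half-integral weight form attached to $f$, and one must show that a suitable $T\in L^+$ with $D_T$ fundamental is actually realized --- the ``careful study'' on p.~651 of \cite{Ik1}; and the paper works with the embedded $SL_2(\Z)=H(\Z)$, which together with $P(\Z)$ generates $G(\Z)$, rather than with the long Weyl element.
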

Of course, we have to specify what kind of Hecke theory we use for each case. At any late,
the issue is only on the normalization factor of a Hecke action and it does not matter as long as we deal with the adelic form
attached to $F_f$ on $G(\A_\Q)$ because since $G$ is semi-simple, it does not contain the central torus.
By virtue of Theorem \ref{main-thm1}, $F_f$ gives rise to a cuspidal automorphic representation
$\pi_F=\pi_\infty\otimes \otimes'_p\pi_p$
of $G(\A_\Q)$.
Here $\pi_\infty$ is a holomorphic discrete series of $G(\Bbb R)$ of the lowest weight $\ell(k)$, and for each prime $p$, $\pi_p$ is unramified at every finite place (but a few exception when $G=SU(2n,H)$), since $F_f$ is of ``level one". In fact, $\pi_p$ turns out to be a
degenerate principal series $\pi_p\simeq I(s_p)$, where $s_p\in \C$ so that $p^{s_p}=\alpha_p$ and
$$I(s)=
\left\{
\begin{array}{ll}
{\rm Ind}_{P(\Q_p)}^{G(\Q_p)}\: |\nu(g)|^{s}_p  & {\rm if}\ G=Sp_{4n}\\
{\rm Ind}_{P(\Q_p)}^{G(\Q_p)}\: |\nu(g)|^{s}_p  & {\rm if}\ G=SU_{2n+1},\\
{\rm Ind}_{P(\Q_p)}^{G(\Q_p)}\: |\nu(g)|^{s}_p  & {\rm if}\ G=SU(2n,H)\ {\rm and}\ p\nmid D_H,\\
{\rm Ind}_{P(\Q_p)}^{G(\Q_p)}\: |\nu(g)|^{2s}_p & {\rm if}\ G=E_{7,3},
\end{array}\right.
$$
where
$\nu: P(\Q_p)\lra \Q^\times_p$ is defined as follows:
$$\nu(g):=\left\{
\begin{array}{ll}
det(A) &\ {\rm if}\ G=Sp_{4n}, P=\Bigg\{g=\left(
\begin{array}{cc}
A & B \\
0_{2n} & {}^t A^{-1}
\end{array}\right) \ \Bigg|\ {}^tB=B  \Bigg\} \\
|det(A)|^2 &\ {\rm if}\ G=SU_{2n+1},
P=\Bigg\{g=\left(\begin{array}{cc}
A & B \\
0_{2n+1} & {}^t {\bar A}^{-1}
\end{array}\right)\ \Bigg|\ {}^t {\bar B}=B  \Bigg\}
\end{array}
\right.
$$
For $SU(2n,H)$ and $E_{7,3}$, see \cite{Yamana} and \cite{KY}, resp.   The relationship between $I(s)$ and the Eisenstein series is explained in \cite{Ku}:
Let $\Phi(g, s)=\Phi_\infty(g, s)\otimes \otimes_p \Phi_p(g, s)$ be a standard section in $I(s)$ such that $\Phi_\infty(k, s)=\nu(\bold k)^{\ell(k)}$, and
$\Phi_p(g,s)=\Phi_p^0(g,s)$ is the normalized spherical section for all $p$.
 Then one can define the adelic and classical Eisenstein series
$$E(g,s,\Phi)=\sum_{\gamma\in \bold P(\Q)\backslash \bold{G}(\Q)} \, \Phi(\gamma g, s), \quad E_{\ell(k),s}(Z)=det(Y)^{\frac s2}
\ds\sum_{\gamma\in \Gamma_\infty\bs\G}j(\gamma,Z)^{-\ell(k)} |j(\gamma,Z)|^{-s}.
$$
Then we have
$$E(g,s,\Phi)=\left\{
\begin{array}{ll}
\det(Y)^{\ell(k)} E_{\ell(k),s+\frac 12-k}(Z), & {\rm if}\ G\ne E_{7,3},\\
\det(Y)^{\frac {s+9}2+\ell(k)} E_{\ell(k),s+1-2k}(Z),  & {\rm if}\ G=E_{7,3},
\end{array}\right.
$$
Hence the degenerate principal series $I(k-\frac 12)$ corresponds to $E_{\ell(k)}(Z)$ if $G\ne E_{7,3}$, and $I(2k-1)$ corresponds to $E_{\ell(k)}(Z)$ if $G=E_{7,3}$.

In terms of representation theory, Theorem \ref{main-thm1} can be reformulated as follows: Let $\pi_\infty$ be the holomorphic discrete series of $G(\Bbb R)$ of the lowest weight $\ell(k)$, and let $\pi_p$ be the above degenerate principal series which is irreducible. Then we can form an irreducible admissible representation of $G(\Bbb A_{\Bbb Q})$: $\pi=\pi_\infty\otimes \otimes_p' \pi_p$.
Then Theorem \ref{main-thm1} is equivalent to the fact that $\pi$ is a cuspidal automorphic representation of $G(\Bbb A)$. In this formulation, at least for $Sp_{4n}$, Arthur's trace formula \cite{A} may give a more general result as follows: By Adams-Johnson's result on $A$-packets, $\pi_\infty$ belongs to a packet with the local character $(-1)^n$.
Since $\pi$ is unramified at every finite place, by the multiplicity formula, $\pi$ is a cuspidal automorphic representation if and only if the global character $(-1)^n$ is equal to the root number of $L(s,f)$ which is $(-1)^k$. Hence we have the parity condition
$k\equiv n$ (mod 2). We have similar results for $SU_{2n+1}$ and $SU(2n,H)$.
However, the advantage of Theorem \ref{main-thm1} is that one can write down the modular form explicitly.
Let $L(s,\pi_f)=\prod_p ((1-\alpha_p p^{-s})(1-\alpha_p^{-1} p^{-s}))^{-1}$ be the (normalized)
automorphic $L$-function of the cuspidal representation $\pi_f$ attached to $f$. In the case of $SU_{2n+1}$, let $\chi(p)=\left(\frac {-D_K}p\right)$ be the quadratic character attached to $K/\Bbb Q$, and
$L(s,f,\chi)=\prod_p ((1-\alpha_p\chi(p) p^{-s})(1-\alpha_p^{-1}\chi(p) p^{-s}))^{-1}$.
For each local component $\pi_p$, one can associate the local $L$-factor $L(s,\pi_p,St)$ of the standard $L$-function of $\pi_F$.
Set $L(s,\pi_F,St)=\ds\prod_p L(s,\pi_p,St)$:

\begin{theorem}\label{main-thm2}
$$L(s,\pi_F,St)= \left\{
\begin{array}{ll}
\zeta(s)\displaystyle \prod_{i=1}^{2n} L(s+n+\frac 12-i,f)  &\ {\rm if}\ G=Sp_{4n},\\
\displaystyle \prod_{i=1}^{2n+1} L(s+n+1-i,f)L(s+n+1-i,f,\chi) &\ {\rm if}\ G=SU_{2n+1} \\
 \displaystyle \prod_{i=1}^{2n} L(s+n+\frac 12-i,f) &\ {\rm if}\ G=SU(2n,H) \\
L(s,{\rm Sym}^3 \pi_f)L(s,f)^2 \ds\prod_{i=1}^4 L(s\pm i,f)^2 \prod_{i=5}^8 L(s\pm i, f) &\ {\rm if}\ G=E_{7,3},
\end{array}\right.
$$
where $L(s,{\rm Sym}^3 \pi_f)$ is the symmetric cube $L$-function.
\end{theorem}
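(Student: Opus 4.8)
The plan is to reduce everything to a local Satake‑parameter computation. Since $\pi_F=\pi_\infty\otimes\otimes_p'\pi_p$ and the standard $L$‑function is by definition the Euler product $L(s,\pi_F,St)=\prod_p L(s,\pi_p,St)$, and since — as recalled above — every finite component $\pi_p$ is the irreducible degenerate principal series $I(s_p)$ with $p^{s_p}=\alpha_p$, the problem is entirely local. For each prime $p$ and each of the four groups $G$ we must realize the Satake parameter of $I(s_p)$ as a semisimple conjugacy class $t_p$ in the dual group $\widehat{G}(\C)$ and then compute the reciprocal characteristic polynomial of $t_p$ on the defining (``standard'') representation $\mathrm{St}$ of $\widehat{G}$; assembling these local factors over $p$ yields the assertion.

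The Satake parameter $t_p$ is obtained as follows. Write $I(s_p)$ as a constituent of $\mathrm{Ind}_{P(\Q_p)}^{G(\Q_p)}\bigl(\mathbf 1_{M}\otimes|\nu|^{s_p}\bigr)$, where $\mathbf 1_{M}$ is the trivial representation of the Siegel Levi $M$; then $t_p$ is the image, under the dual embedding $\widehat{M}\hookrightarrow\widehat{G}$, of the product of the Satake parameter of $\mathbf 1_{M_{\mathrm{der}}}$ with the central twist coming from $|\nu|^{s_p}$. Because the Satake parameter of the trivial representation of $GL_m$ is the ``$\rho$‑shift'' $\mathrm{diag}(p^{(m-1)/2},\dots,p^{-(m-1)/2})$, the element $t_p$ is completely explicit. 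For $G=Sp_{4n}$ one has $\widehat{G}=SO_{4n+1}(\C)$, $\mathrm{St}$ the $(4n+1)$‑dimensional standard representation, and $\widehat{M}=GL_{2n}(\C)$ embedded by $g\mapsto\mathrm{diag}(g,1,{}^{t}g^{-1})$; hence the reciprocal roots of $\mathrm{St}(t_p)$ are $\{1\}\cup\{\alpha_p^{\pm1}p^{\,n+\frac12-i}:1\le i\le 2n\}$, and the product over $p$ gives $\zeta(s)\prod_{i=1}^{2n}L(s+n+\tfrac12-i,f)$. The cases $G=SU_{2n+1}$ and $G=SU(2n,H)$ are the same computation relative to their dual groups: the nontrivial Galois action in the unitary case is exactly what produces the twisted factors $L(s+n+1-i,f,\chi)$ next to $L(s+n+1-i,f)$, while the quaternionic dual group yields $\prod_{i=1}^{2n}L(s+n+\tfrac12-i,f)$ with no zeta factor. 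One extra point in the quaternionic case: at the finitely many primes $p\mid D_H$ the component $\pi_p$ is not the full degenerate principal series, so there $L(s,\pi_p,St)$ has to be computed separately and checked to contribute the same Euler factor.

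The exceptional case $G=E_{7,3}$ is the main obstacle. Here $\widehat{G}=E_7(\C)$ and $\mathrm{St}$ is the $56$‑dimensional minuscule representation. The key structural input is that the Siegel parabolic $P=MN$ of $E_{7,3}$ has abelian unipotent radical of dimension $27$ (the exceptional Jordan algebra), so $\mathfrak{e}_7$ is graded with $\dim\mathfrak n^{\pm}=27$ and $\widehat{M}=(E_6\times GL_1)(\C)$; correspondingly the $56$ restricts to $\widehat{M}$ as $\mathbf 1\oplus 27\oplus\overline{27}\oplus\mathbf 1$, where the central $GL_1$ is normalized so that $I(s_p)$ (whose inducing character is $|\nu|^{2s_p}$) endows the two singlets with the twist $\alpha_p^{\pm3}$ and the pieces $27,\overline{27}$ with the twist $\alpha_p^{\pm1}$. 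Feeding $t_p$ into $\mathrm{St}$, the two singlets together with the ``degree zero'' weight space inside $27\oplus\overline{27}$ assemble into the four eigenvalues $\alpha_p^{\pm1},\alpha_p^{\pm3}$ of $\mathrm{Sym}^3$ of the Satake parameter of $f$ (explaining the factor $L(s,\mathrm{Sym}^3\pi_f)$, together with the extra $L(s,f)^2$), and the remaining eigenvalues come from the $E_6$‑Satake parameter of the trivial representation evaluated on $27\oplus\overline{27}$, whose eigenvalues are the $p^{\langle\mu,\rho^\vee\rangle}$ over the weights $\mu$ of the $27$. Using that $\dim_q 27=\sum_\mu q^{\langle\mu,\rho^\vee\rangle}$ is palindromic one finds these exponents form the multiset $\{0^{3},(\pm1)^2,(\pm2)^2,(\pm3)^2,(\pm4)^2,\pm5,\pm6,\pm7,\pm8\}$, which produces exactly $\prod_{i=1}^4 L(s\pm i,f)^2\prod_{i=5}^8 L(s\pm i,f)$. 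The hard part is precisely this branching of the $56$ of $E_7$ and the determination of the $\rho^\vee$‑pairings for the $27$ of $E_6$, which rely on the explicit Freudenthal--Tits structure theory developed in \cite{KY}, whereas the classical cases only require elementary linear algebra of the dual groups.
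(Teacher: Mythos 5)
Your proposal is correct, and its overall strategy coincides with the paper's: the paper's entire argument for Theorem \ref{main-thm2} is the remark at the end of Section 4 that, since each finite component $\pi_p$ is the explicitly known unramified degenerate principal series $I(s_p)$ with $p^{s_p}=\alpha_p$, the standard $L$-function is a purely local Satake-parameter computation. Where you diverge is in how the $E_{7,3}$ case is actually carried out: the paper invokes the Langlands--Shahidi method for the pair $GE_7\subset E_8$ (where the $56$-dimensional representation of $E_7(\C)$ occurs in the adjoint action of the dual Levi on the nilradical, so the local factor comes for free from \cite{kim1}), whereas you compute directly from the branching $56|_{E_6\times GL_1}=1\oplus 27\oplus\overline{27}\oplus 1$ together with the $\rho^\vee$-pairings on the weights of the $27$ (equivalently, its decomposition $V(16)\oplus V(8)\oplus V(0)$ under the principal $SL_2$ of $E_6$, giving the exponent multiset $\{0^3,(\pm1)^2,\dots,(\pm4)^2,\pm5,\dots,\pm8\}$). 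The two routes produce the same $56$ eigenvalues $\alpha_p^{\pm3}$, $\alpha_p^{\pm1}$ (multiplicity $3$ at exponent $0$, accounting for ${\rm Sym}^3$ plus $L(s,f)^2$), and $\alpha_p^{\pm1}p^{\pm i}$ with the stated multiplicities; your version is more self-contained but requires the explicit $E_6$-weight combinatorics, while the paper's outsources that to the Langlands--Shahidi normalization. Your remark that the finitely many primes $p\mid D_H$ in the quaternionic case need a separate local computation is a point the paper itself passes over silently, and is worth keeping.
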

Notice that $\pi_p$ for $G=E_{7,3}$ is slightly different from other cases (Note $2s_p$ rather than $s_p$) and
the third symmetric power $L$-function appears in the standard L-function. Note also that in the case $G=SU_{2n+1}$, $L(s,f)L(s,f,\chi)=L(s, \pi_K)$, the $L$-function of the base change $\pi_K$ of $\pi_f$ to $K$.

In Section 2, we review the tube domains.
In Section 3, we review the Jacobi group,
Jacobi forms, and a key property of the Fourier-Jacobi expansion of
Siegel Eisenstein series, namely, the Fourier-Jacobi coefficients of Eisenstein series are a sum of products of theta functions and Eisenstein series.
In Section 4, we will give a sketch of proof of the main theorem.
Except for $G=Sp_{4n}$, the situations are similar, in that we do not need to consider half-integral modular forms.
Finally  in Section 5, we discuss conjectures and problems related to the results in \cite{KY}.

\smallskip

\textbf{Acknowledgments.} We would like to thank H. Narita and S. Hayashida for their invitation to participate in the RIMS workshop on Modular Forms and Automorphic Representations on February 2-6, 2015.

\section{Description of tube domains}
\subsection{$Sp_{2n}$}
The tube domain is given by
$$\mathbb{H}_n:=\{Z\in M_{n}(\C)\ |\ {}^tZ=Z,\ {\rm Im}(Z)>0 \}\subset \C^{\frac{n(n+1)}{2}}$$
and $\gamma=\left(
\begin{array}{cc}
A & B \\
C & D
\end{array}\right)\in Sp_{2n}(\R)$ acts on $\mathbb{H}_n$ as
$\gamma(Z)=(AZ+B)(CZ+D)^{-1}$. Put $j(\gamma,Z)=\det(CZ+D)$.

\subsection{$SU_{2n+1}$}
The tube domain is given by
$$\mathcal{H}_{2n+1}:=\{Z\in M_{2n+1}(\C)\ |\ \frac{1}{2\sqrt{-1}}(Z-{}^t{\bar Z})>0 \}\subset \C^{(2n+1)^2}$$
and $\gamma=\left(
\begin{array}{cc}
A & B \\
C & D
\end{array}\right)\in SU_{2n+1}(\R)$ acts on $\mathcal{H}_{2n+1}$ as
$\gamma(Z)=(AZ+B)(CZ+D)^{-1}$. Put $j(\gamma,Z)=\det(CZ+D)$.

\subsection{$SU(2n,H)$}
Let $H$ be a definite quaternion algebra with basis ${1,i,j,k=ij}$ over $\Q$.
By Lemma 1.1 of \cite{Yamana}, there exists a unique polynomial map (with $4n$ variables) $P:M_n(H)\lra \Q$ such that
$\nu(X)=P(X)^2$ and $P(I_n)=1$. Put ${\rm Paf}(X)=P(X)$ for any $X\in M_n(H)$.
The tube domain is given by
$$\mathfrak{H}_{n}:=\{Z\in M_{n}(H\otimes_\Q\C)\ |\ {}^\ast Z=Z,\ {\rm Im}(Z)>0 \}\subset \C^{2n(n+1)}$$
and $\gamma=\left(
\begin{array}{cc}
A & B \\
C & D
\end{array}\right)\in SU(2n,H)(\R)$ acts on $\mathfrak{H}_n$ as
$\gamma(Z)=(AZ+B)(CZ+D)^{-1}$. Put $j(\gamma,Z)=\nu(CZ+D)^{\frac{1}{2}}$.

\subsection{$E_{7,3}$} This group is defined by using Cayley numbers and the structure is
rather complicated than previous cases.  We refer \cite{B},\cite{Coxeter}, \cite{kim}, and
Section 2 of \cite{KY}.
For any field $K$ whose characteristic is different from $2$ and $3$,
the Cayley numbers $\frak C_K$ over $K$ is an eight-dimensional vector space over $K$ with basis
$\{e_0=1,e_1,e_2,e_3,e_4,e_5,e_6,e_7\}$ satisfying certain rules of multiplication.
Let $\frak J_K$ be the exceptional Jordan algebra consisting of the element:
\begin{equation*}\label{x}
X=(x_{ij})_{1\le i,j\le 3}=\left(\begin{array}{ccc}
a& x & y \\
\bar{x} & b & z \\
\bar{y}& \bar{z} & c
\end{array}\right),
\end{equation*}
where $a,b,c\in Ke_0=K$ and $x,y,z\in \frak C_K$.
We also define
$$\frak J_2=\Bigg\{\left(\begin{array}{cc}
a& x  \\
\bar{x} & b
\end{array}\right) \Bigg|\ a,b\in K, \ x\in \frak C_K  \Bigg\}.
$$
Then the exceptional domain is
$$\frak D:=\{Z=X+Y\sqrt{-1}\in \frak J_\C\ |\ X,Y\in \frak J_\R,\ Y>0\}$$
which is a complex analytic subspace of $\C^{27}$ . We also define
$$\frak D_2:=\{X+Y\sqrt{-1}\in \frak J_2(\C)\ |\ X,Y\in \frak J_2(\R),\ Y>0\}
$$
which is the tube domain of $Spin(2,10)$, i.e., $Spin(2,10)$ acts on $Z\in \frak D_2$.

\section{Jacobi groups and Jacobi forms}
In this section we review Jacobi groups and Jacobi forms with a matrix index.

\subsection{Jacobi groups}
We are concerned with the Jacobi group $J$ realized in $G$, which is a semi-direct product
$J \simeq V\rtimes H$ of a semisimple group $H$ and a Heisenberg group $V$ with a 2 step unipotency which has a form
$V=X\cdot Y\cdot Z$, where each factor is an additive group (scheme), ${\rm dim}(X)={\rm dim}(Y)$, and
the center of $V$ is $Z$. We further require that the action of $H$ on $Z$ is trivial.

In our case, $H=SL_2$ if $G\not= SU_{2n+1}$, and $H=SU_1$ if $G=SU_{2n+1}$.
If we write an element as $v=v(x,y,z)$, then by definition, an alternating form $\langle \ast,\ast \rangle$ is
furnished on
$X\oplus Y$ such that the multiplication of two elements in $V$ is given by
$$v(x,y,z)\cdot v(x',y',z')=v(x+x',y+y',z+z'+\frac{1}{2}\langle (x,y),(x',y') \rangle)
$$
and further $SL_2$ or $U_1$ acts on $V$ as
$$\gamma\cdot v(x,y,z)=v(ax+cy,bx+dy,z),\ \gamma=\left(
\begin{array}{cc}
a & b \\
c & d
\end{array}\right)\in SL_2\ {\rm or}\ U_1.$$
We shall give a table of the dimension ${\rm dim}(X)$ of $X$ as a vector scheme over $\Z$ which will be related to the difference of the weights between the original form and the lift.
\begin{table}[htbp]
\begin{center}
{\renewcommand\arraystretch{2}
\begin{tabular}{|c|c|c|c|c|}
\hline
$G$  &  $Sp_{4n}$  & $SU_{2n+1}$ & $SU(2n,H)$ & $E_{7,3}$    \\
\hline
${\rm dim}(X)$  & $2n-1$  &  $4n$ & $4(n-1)$  & $16$    \\
\hline
\end{tabular}}
\end{center}
\caption{}
\end{table}

The difference between $\ell(k)$ and $2k$ is given by $\frac{1}{2}{\rm dim}(X)$ except for $Sp_{4n}$.
For $Sp_{4n}$, we first obtain a cusp form of the half-integral weight $k+\frac 12$  via Shimura correspondence
$S_{2k}(SL_2(\Z))\simeq S_{k+\frac{1}{2}}(\Gamma_0(4))^+$  from the cusp form $f\in S_{2k}(SL_2(\Z))$. Then the difference should be
understood as
$\ell(k)-(k+\frac{1}{2})=n-\frac{1}{2}$, which is nothing but $\frac{1}{2}{\rm dim}(X)$ for $Sp_{4n}$.

For $Sp_{4n}$,
$$V=\Bigg\{v(x,y,z)=\left(
\begin{array}{cccc}
1_{2n-1} & x & z & y \\
0 & 1 & {}^t y & 0 \\
  &   &     1_{2n-1}  & 0 \\
  &   &     -{}^t x & 1
 \end{array}\right)\in Sp_{4n} \Bigg|\ {}^tz-y({}^t x)=z-x({}^t y)    \Bigg\}=X\cdot Y\cdot Z,
$$
where
$X=\{v(x,0,0)\in V\},\ Y=\{v(x,0,0)\in V\}$, and $Z=\{v(0,0,z)\in V\}$, and
\begin{equation}\label{SL}
SL_2\simeq H:=\Bigg\{\left(
\begin{array}{cccc}
1_{2n-1} & 0 & 0_{2n-1} & 0 \\
0 & a &  0 & b \\
0_{2n-1}  & 0  &     1_{2n-1}  & 0 \\
 0 & c  &     0 & d
 \end{array}\right)\in  Sp_{4n} \Bigg\}.
\end{equation}

For $SU_{2n+1}$,
$$V=\Bigg\{v(x,y,z)=\left(
\begin{array}{cccc}
1_{2n} & x & z & y \\
0 & 1 & {}^t {\bar y} & 0 \\
  &   &     1_{2n}  & 0 \\
  &   &     -{}^t {\bar x} & 1
 \end{array}\right)\in SU_{2n+1} \Bigg|\ {}^t {\bar z}-y({}^t {\bar x})=z-x({}^t {\bar y})    \Bigg\}=X\cdot Y\cdot Z,
$$
where
$X=\{v(x,0,0)\in V\},\ Y=\{v(x,0,0)\in V\}$, and $Z=\{v(0,0,z)\in V\}$, and
\begin{equation}\label{SL-1}
U_1\simeq H:=\Bigg\{\left(
\begin{array}{cccc}
1_{2n} & 0 & 0_{2n} & 0 \\
0 & a &  0 & b \\
0_{2n}  & 0  &     1_{2n}  & 0 \\
 0 & c  &     0 & d
 \end{array}\right)\in SU_{2n+1} \Bigg\}.
\end{equation}

We omit details for $SU(2n,H)$ or $E_{7,3}$. Instead we refer Section 5 of \cite{Yamana}, and Section 3 and 4 of \cite{KY}.

Recall $L$ in the introduction. This is the parameter space of Fourier expansion of a modular form on $\D$.
Let $Z'$ be a subgroup of the unipotent radical $N$ of the Siegel parabolic subgroup
consisting of matrices whose last low and last column are zero. Then $Z'$ is naturally identified with $Z$.
We denote by $L'$ (resp. $L'_+$) the subset of $Z'(\Q)$ consisting of semi-positive (resp. positive), semi-integral matrices.
For any $T\in L^+$, there exists $S\in L'_+$ such that
$T=\left(
\begin{array}{cc}
S & \alpha \\
\beta  & x
\end{array}\right)$ with $x\in \Bbb Z_+$ and
$\beta=\begin{cases}  {}^t\alpha, &\text{if $G=Sp_{2n}$}\\
 {}^t{\bar \alpha}, &\text{if $G=SU_{2n+1}$ or $SU(2n,H)$}\\
{}^t {\bar\alpha}, &\text{if $G=E_{7,3}$}.\end{cases}
$

Henceforth we fix $S\in L'_+$. We define the map $\lambda_S$ on $Z$ by $z\mapsto \frac{1}{2}\tr_G(Sz)$ if $G\not=E_{7,3} $ and
$z\mapsto \frac{1}{2}(S,z)$ for $E_{7,3}$.
Then for any domain ring $R$ with characteristic zero, the map $V(R)\lra X\oplus Y\oplus R,\ v(x,y,z)\mapsto (x,y,\lambda_S(z))$
gives rise to the Heisenberg structure on $X\oplus Y\oplus R$. Hence
for any two elements $(x,y,a),(x',y',b)\in X\oplus Y\oplus R$, the multiplication is given by
$$(x,y,a)\ast(x',y',b)=(x+x',y+y',a+b+\frac{1}{2}\langle (x,y),(x',y') \rangle_S)$$
where $\langle (x,y),(x',y') \rangle_S=\sigma_S(x,y')-\sigma_S(x',y)$.
Here $\sigma_S(\ast,\ast)$ on $X\oplus Y$ is given by
   $$\sigma_S(x,y)= \left\{
\begin{array}{ll}
{}^t xS y  & {\rm if}\ G=Sp_{4n},\\
{}^t {\bar x}S y  & {\rm if}\ G=SU_{2n+1}\ {\rm or}\ SU(2n,H) \\
 (S,x({}^t\overline{y})+y({}^t\overline{x})) & {\rm if}\ G=E_{7,3}
\end{array}\right.
$$
Put $\mathfrak{X}:=X(\R)\otimes_\R\C$ and $\D_1:=\mathbb{H}\times \mathfrak{X}$.
The group $J(\R)$ acts on $\mathfrak{D}_1$ by
$$\beta(\tau,u):=\left(\gamma \tau,\frac{u}{c\tau+d}+x(\gamma \tau)+y \right),\
\beta=v(x,y,z)h,\ v(x,y,z)\in V(\R),\ h=h(\gamma)\in H(\R)$$ where
$\gamma=\left(\begin{array}{cc}
a & b \\
c & d
\end{array}\right)\in {\rm SL}_2(\R)$.
Here $\gamma \tau=\ds\frac{a\tau+b}{c\tau+d}$ and put $j(\gamma,\tau):=c\tau+d$ for simplicity.
For each positive half integer $k$, the automorphy factor on $J(\R)\times \mathfrak{D}_1$ is defined by
$$j_{k,S}(\beta,(\tau,u)):=j(\gamma,\tau)^{k}
\bold{e}(-2\lambda_S(z)+\frac{c}{j(\gamma,\tau)}\sigma_S(u,u)-\frac{2\sigma_S(x,u)}{j(\gamma,u)}-
\sigma_S(x,x)(\gamma\tau)-\sigma_S(x,y)),
$$
where $\bold{e}(\ast)=\exp(2\pi\sqrt{-1}\ast)$.
When $k$ is not an integer, $j(\gamma,z)^k=(j(\gamma,z)^\frac{1}{2})^{2k}$ is defined by the automorphy factor
$j(\gamma,z)^\frac{1}{2}$ of the metaplectic covering $\widetilde{SL}_2(\R)$
of $SL_2(\R)$.

For each function $f:\mathfrak{D}_1\lra \C$ and $\beta\in V(\R)$,
we define the ``slash" operator $f|_{k,S}[\beta]:\mathfrak{D}_1\lra \C$ by
$$f|_{k,S}[\beta](\tau,u):=j_{k,S}(\beta,(\tau,u))^{-1}f(\beta(\tau,u)).
$$

\subsection{Jacobi forms with a matrix index}
We define and study Jacobi forms of matrix index on $\mathfrak{D}_1=\mathbb{H}\times \mathfrak{X}$ in the classical setting.
Set
$$\Gamma_J:=J(\Q)\cap G(\Z).
$$

\begin{defin}\label{jacobi} Let $k$ be a positive even integer if $G\not=Sp_{4n}$, a
positive half-integral integer if $G=Sp_{4n}$, and $S$ be an element of $L'_+$.
We say a holomorphic function $\phi:\mathfrak{D}_1\lra \C$ is a Jacobi form (resp. Jacobi cusp form) of weight $k$ and index $S$ if
$\phi$ satisfies the following conditions:

\begin{enumerate}
\item $\phi|_{k,S}[\beta]=\phi$ for any $\beta\in \Gamma_J$
\item $\phi$ has a Fourier expansion of the form
$$\phi(\tau,u)=\sum_{\xi\in X(\Q),\ N\in \Z}c(N,\xi)\bold{e}(N\tau+\sigma_S(\xi,u)),
$$
where $c(N,\xi)=0$ unless $S_{\xi,N}:=
\left(\begin{array}{cc}
S & S\xi \\
{}^\star \xi S & N
\end{array}\right)$ belongs to $L'$ (resp. $L'_{+}$) where ${}^\star \xi$ stands for ${}^t \xi$ if $G=Sp_{4n}$,
${}^t {\bar\xi}$ if $G=SU_{2n+1}$ or $SU(2n,H)$, and ${}^t \bar{\xi}$ if $G=E_{7,3}$.
\end{enumerate}
We denote by $J_{k,S}(\Gamma_J)$ (resp. $J^{{\rm cusp}}_{k,S}(\Gamma_J)$) the space of Jacobi forms
(resp. Jacobi cusp forms) of weight $k$ and index $S$.
\end{defin}
Let us extend the quadratic form $\sigma_S$ linearly to that on
$\mathfrak{X}$.
We denote by $ \mathcal{S}(X(\A_f))$ the space of Schwartz functions on $X(\A_f)$.
For each $\varphi \in \mathcal{S}(X(\A_f))$,
the classical theta function on $\mathfrak{D}_1:=\mathbb{H}\times \mathfrak{X}$ is given by
$$\theta^S_\varphi(\tau,u):=\ds\sum_{\xi\in X(\Q)}\varphi(\xi)\bold{e}(\sigma_S(\xi,\xi)\tau+2\sigma_S(\xi,u)).$$

Define the dual of the lattice $\Lambda:=X(\Z)$ with respect to the quadratic form $\sigma_S$ by
$$\widetilde{\Lambda}(S)=\{x\in X(\Q)\ |\ \sigma_S(x,y)\in \Z {\rm \ for\ all}\ y\in \Lambda \}.$$
If $S\in L'_{+}$, then the quotient $\widetilde{\Lambda}(S)/\Lambda$ is a finite group.
Fix a complete representative $\Xi(S)$ of $\widetilde{\Lambda}(S)/\Lambda$ and denote by
$\varphi_\xi$ the characteristic function $\xi+\ds\prod_{p<\infty}X(\Z_p)\in \mathcal{S}(X(\A_f))$.
Any Jacobi form turns to be the linear combination of products of elliptic modular forms and
theta functions by the following lemma.
\begin{lemma}\label{product} Assume $S\in L'_{+}$.
Let $\Xi(S)$ be a complete representative of $\widetilde{\Lambda}(S)/\Lambda$. Then
any Jacobi form $\phi\in J_{k,S}(\Gamma_J)$ can be written as
$$\phi(\tau,u)=\sum_{\xi\in \Xi(S)}\phi_{S,\xi}(\tau) \theta^S_{\varphi_\xi}(\tau,u),\quad
\phi_{S,\xi}(\tau)=\sum_{N\in \Z \atop N-\sigma_S(\xi,\xi)\ge 0}c(N,\xi)\bold{e}((N-\sigma_S(\xi,\xi))\tau).
$$
Furthermore, for each $\xi\in \Xi(S)$,  $\phi_{S,\xi}(\tau)$ is an elliptic modular form of weight $k-\frac{1}{2}{\rm dim}(X)$.
\end{lemma}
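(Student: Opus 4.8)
The plan is to carry out the theta decomposition of Eichler--Zagier, in its matrix-index generalization, using the explicit structure of $\Gamma_J=J(\Q)\cap G(\Z)$. Since $H$ and the Heisenberg group $V$ are realized over $\Z$ inside $G$, the group $\Gamma_J$ contains the image under $h$ of the relevant arithmetic subgroup of $SL_2$ (the copy of $SL_2(\Z)$ when $G\ne SU_{2n+1}$) together with the lattice of Heisenberg translations $v(x,y,z)$ with $x\in X(\Z)$, $y\in Y(\Z)$, and $z$ integral. These two families of elements carry all the information needed.

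The first step is to use the translation part. Applying condition (1) of Definition \ref{jacobi} to $\beta=v(x,y,z)$ with integral data (so $h=1$, hence $\gamma\tau=\tau$ and $j(\gamma,\tau)=1$), the automorphy factor $j_{k,S}(\beta,(\tau,u))$ reduces to $\bold{e}(-\sigma_S(x,x)\tau-2\sigma_S(x,u))$ times a factor independent of $(\tau,u)$, so that $\phi|_{k,S}[\beta]=\phi$ is exactly a quasi-periodicity of $\phi$ in $u$. Substituting the Fourier expansion of $\phi$ and comparing coefficients, this forces $c(N,\xi)$ to depend only on the class of $\xi$ in $X(\Q)/\Lambda$ and on the discriminant $N-\sigma_S(\xi,\xi)$; together with the support condition of Definition \ref{jacobi}(2) and the positivity of $S$ it also gives $c(N,\xi)=0$ unless $\xi\in\widetilde{\Lambda}(S)$ and $N-\sigma_S(\xi,\xi)\ge 0$. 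Regrouping the (locally uniformly convergent) Fourier series of $\phi$ by the class of $\xi$ modulo $\Lambda$ and setting $D=N-\sigma_S(\xi,\xi)$, the $\tau$- and $u$-dependences separate: the sum over a coset $\xi+\Lambda$ produces $\theta^S_{\varphi_\xi}(\tau,u)$ by definition of $\varphi_\xi$, and what multiplies it is precisely $\phi_{S,\xi}(\tau)$. This gives
$$\phi(\tau,u)=\sum_{\xi\in\Xi(S)}\phi_{S,\xi}(\tau)\,\theta^S_{\varphi_\xi}(\tau,u),$$
and since the $\theta^S_{\varphi_\xi}$ for distinct classes $\xi$ have pairwise disjoint $u$-frequency supports, this expansion is unique.

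The second step is modularity of the coefficients. For $\gamma$ in the relevant arithmetic subgroup of $SL_2$, apply $\phi|_{k,S}[h(\gamma)]=\phi$ and re-expand both sides in the theta basis. By Poisson summation in the $\dim(X)$ variables of $X$, the finite family $\{\theta^S_{\varphi_\xi}\}_{\xi\in\Xi(S)}$ transforms as a vector-valued modular form of weight $\frac{1}{2}\dim(X)$ via a Weil representation $\rho_S$ on $\C[\widetilde{\Lambda}(S)/\Lambda]$ (on the metaplectic cover $\widetilde{SL}_2$ when $\dim(X)$ is odd, which is the case $G=Sp_{4n}$, $\dim(X)=2n-1$). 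Matching this against the invariance of $\phi$ and using the uniqueness just noted, the vector $(\phi_{S,\xi})_{\xi\in\Xi(S)}$ must transform as a vector-valued modular form of weight $k-\frac{1}{2}\dim(X)$ under the contragredient of $\rho_S$. Since $\rho_S$ has finite image, it factors through a principal congruence subgroup $\Gamma(M)$, $M=M(S)$, so each component $\phi_{S,\xi}$ is a holomorphic elliptic modular form of weight $k-\frac{1}{2}\dim(X)$ on a congruence subgroup; holomorphy at the cusps follows from $N-\sigma_S(\xi,\xi)\ge 0$, and for $\phi\in J^{{\rm cusp}}_{k,S}(\Gamma_J)$ the strict inequality yields a cusp form.

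I expect the main obstacle to be the case-by-case bookkeeping rather than the strategy: one has to identify which $v(x,y,z)$ lie in $\Gamma_J$ and evaluate $j_{k,S}$ on them so that the coefficient relations and the integrality of $\sigma_S$ on $\Lambda$ come out cleanly (for $G=E_{7,3}$ this is the Cayley-algebra computation for $\sigma_S$ and $\lambda_S$, and for $SU(2n,H)$ its quaternionic analogue), and one needs the matrix-index theta transformation law with the correct weight-$\frac{1}{2}\dim(X)$ automorphy factor, including the theta-multiplier subtlety when $\dim(X)=2n-1$ is odd. This last point is exactly why $j(\gamma,\tau)^k$ for half-integral $k$ was set up through the metaplectic cover, and it is where the half-integral contributions cancel to leave the integral weight $k-\frac{1}{2}\dim(X)$.
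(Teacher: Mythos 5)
Your proposal is correct and is exactly the standard Eichler--Zagier theta-decomposition argument (quasi-periodicity under the integral Heisenberg translations to isolate the theta components, then the Weil-representation transformation of the $\theta^S_{\varphi_\xi}$ together with their linear independence to get modularity of weight $k-\frac{1}{2}\dim(X)$), which is precisely the route the paper takes by citing example (iv) of Section 2 of Krieg and p.~656 of Ikeda. The only caveats are normalization bookkeeping (the factor of $2$ in $\sigma_S(\xi,u)$ versus the theta frequencies, and the metaplectic multiplier when $\dim(X)$ is odd), which you correctly flag.
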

\begin{proof} See example (iv) at Section 2 of \cite{Krieg} and also the argument at p.656 of \cite{Ik1}.
\end{proof}
Let $k$ be a positive integer and $F$ be a modular form of weight $k$ on $\mathfrak{D}$.
We rewrite the variable $Z$ on $\D$ as
$\left(\begin{array}{cc}
W & u \\
v & \tau
\end{array}\right)$ where $\tau\in\mathbb{H}$, $u\in X(\R)\otimes_\R\C$, and $W\in \mathbb{H}_{2n-1},\mathcal{H}_{2n},\mathfrak{H}_{n-1}$,or
$\mathfrak{D}_2$. Note that $v$ is determined by $u$.
Then we have the Fourier-Jacobi expansion
\begin{equation}\label{FJ}
F\left(\begin{array}{cc}
W & u \\
v & \tau
\end{array}\right)=\sum_{S\in  L'} F_S(\tau,u)\bold{e}((S,W)).
\end{equation}
\begin{lemma}\label{FJC} Keep the notation as above. Assume $S\in L'_+$.
Then $F_S(\tau,u)\in J_{k,S}(\Gamma_J)$.
\end{lemma}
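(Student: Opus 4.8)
The plan is to verify directly the two conditions of Definition~\ref{jacobi}. Holomorphy of $F_S$ is immediate: $F_S$ is the $S$-th Fourier coefficient, with respect to the period lattice acting on $\mathrm{Re}(W)$, of the holomorphic function $F$, hence holomorphic in the remaining variables; and since $Z$ is symmetric (resp.\ hermitian, resp.\ an element of the exceptional Jordan algebra over $\C$) the block $v$ is determined by $u$, so $F_S$ is genuinely a function on $\D_1=\h\times\X$. The hypothesis $S\in L'_+$ is used twice below: to make $\widetilde{\Lambda}(S)/\Lambda$ finite (so that $J_{k,S}(\Gamma_J)$ is the object of Definition~\ref{jacobi}), and because a positive definite $S$ is invertible over $\Q$.

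\emph{Transformation law.} Fix $\beta\in\Gamma_J=J(\Q)\cap G(\Z)$ and write $\beta=v(x,y,z)\,h(\gamma)$ with $\gamma=\left(\begin{smallmatrix}a&b\\ c&d\end{smallmatrix}\right)$ in $SL_2$ (or $U_1$, when $G=SU_{2n+1}$). Using the explicit embeddings of $V$ and $H$ into $G$ from Section~3, one computes the action of $\beta$ on $Z=\left(\begin{smallmatrix}W&u\\ v&\tau\end{smallmatrix}\right)$ via $Z\mapsto(AZ+B)(CZ+D)^{-1}$ (and its $\nu$-, resp.\ Jordan-algebra, analogue for $SU(2n,H)$ and $E_{7,3}$). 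A direct block computation then yields: (i) $j(\beta,Z)=j(\gamma,\tau)$, since $V$ is unipotent (contributing $1$) and $h(\gamma)$ contributes the factor $c\tau+d$; (ii) the $(\tau,u)$-block of $\beta(Z)$ is exactly the action $\beta(\tau,u)$ of $J(\R)$ on $\D_1$ recalled in Section~3; (iii) the $W$-block of $\beta(Z)$ equals $W+R(\beta;\tau,u)$ for an explicit term $R$ not involving $W$. Substituting into the Fourier--Jacobi expansion \eqref{FJ} and using $F(\beta Z)=j(\beta,Z)^{k}F(Z)=j(\gamma,\tau)^{k}F(Z)$, then comparing the coefficient of $\mathbf{e}((S,W))$ on both sides -- legitimate because the $W$-shift does not depend on $W$ and so does not mix the characters $S'\mapsto\mathbf{e}((S',\cdot))$ -- gives
\[
F_S(\beta(\tau,u))=j(\gamma,\tau)^{k}\,\mathbf{e}\!\big(-(S,R(\beta;\tau,u))\big)\,F_S(\tau,u).
\]
It then remains to recognize the right-hand factor as the automorphy factor $j_{k,S}(\beta,(\tau,u))$ of Section~3: the exponential $\mathbf{e}(-(S,R))$ unwinds, term by term, into the factors involving $\lambda_S(z)$, $\sigma_S(u,u)$, $\sigma_S(x,u)$, $\sigma_S(x,x)$ and $\sigma_S(x,y)$ that occur there. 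This is condition~(1). When $G=Sp_{4n}$ the weight $k$ is half-integral; the same computation is carried out over the metaplectic cover $\widetilde{SL}_2(\R)$, using the square-root automorphy factor, which is the point requiring extra care.

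\emph{Fourier support condition.} Write the Fourier expansion of $F$ as $F(Z)=\sum_{T}a(T)\,\mathbf{e}(\tr_G(TZ))$, the sum over semi-integral $T$; the Koecher principle forces $a(T)=0$ unless $T\ge 0$. Decompose $T=\left(\begin{smallmatrix}S'&\alpha\\ {}^\star\alpha&N\end{smallmatrix}\right)$ compatibly with $Z=\left(\begin{smallmatrix}W&u\\ v&\tau\end{smallmatrix}\right)$; setting $\alpha=S'\xi$ (legitimate as $S'$ is invertible over $\Q$, and semi-integrality of $T$ then forces $\xi\in\widetilde{\Lambda}(S')$) one finds that $\tr_G(TZ)$ splits as $(S',W)$, a term pairing $\xi$ with $u$, and a term $N\tau$. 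Grouping the summands whose $S'$-block equals $S$ yields the Fourier expansion of $F_S$ in the shape of Definition~\ref{jacobi}(2), with $(N,\xi)$-coefficient $c(N,\xi)=a(S_{\xi,N})$, where $S_{\xi,N}=\left(\begin{smallmatrix}S&S\xi\\ {}^\star\xi S&N\end{smallmatrix}\right)$. Thus $c(N,\xi)$ vanishes unless $S_{\xi,N}$ is semi-integral, i.e.\ $S_{\xi,N}\in L'$ (and in fact positive semidefinite, again by Koecher), which is condition~(2). Together, (1) and (2) give $F_S\in J_{k,S}(\Gamma_J)$.

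\emph{Main obstacle.} The only substantive step is the first one: performing the block multiplication (the Jordan-algebra version for $E_{7,3}$) that describes the $\Gamma_J$-action on $Z$ in the coordinates $(W,u,\tau)$, and checking that the exponential correction produced by the translation of the $W$-block reassembles \emph{exactly} into the Jacobi automorphy factor $j_{k,S}$, together with the metaplectic refinement for $G=Sp_{4n}$. Everything after that is a formal rearrangement of the Fourier series.
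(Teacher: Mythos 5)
The paper states this lemma without proof---it is the standard fact that Fourier--Jacobi coefficients of modular forms are Jacobi forms (Eichler--Zagier in degree one, Ziegler and Ikeda \cite{Ik3} in general)---and your argument is precisely that standard proof: block-compute the $\Gamma_J$-action in the coordinates $(W,u,\tau)$, observe that $j(\beta,Z)=j(\gamma,\tau)$ and that the $W$-block only translates by a $W$-independent amount, match the resulting exponential against $j_{k,S}$, and read off the support condition from semi-integrality of the Fourier expansion together with the Koecher principle. The outline is sound, and you correctly isolate the one substantive step (the automorphy-factor identification, which for $E_{7,3}$ must be carried out in the Jordan-algebra formalism rather than by matrix blocks). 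One small correction: in this lemma $F$ has integral weight $k$, so $F_S$ is an integral-weight Jacobi form even for $G=Sp_{4n}$ and no metaplectic cover is needed in its transformation law; half-integral weights and $\widetilde{SL}_2(\R)$ enter only later, in the theta decomposition of Lemma \ref{product}, where the components $\phi_{S,\xi}$ have weight $k-\tfrac{1}{2}\dim(X)$.
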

\begin{remark}\label{important-rmk}
Consider any holomorphic function $F(Z)$ with $Z=\left(\begin{array}{cc}
W & u \\
v & \tau
\end{array}\right)$ on $\D$ which is invariant under $P(\Z)$.
Then one has
the Fourier and the Fourier-Jacobi expansion
$$F(Z)=\sum_{T\in  L}A_F(T)\bold{e}((T,Z))=\sum_{S\in  L'}F_S(\tau,u)\bold{e}((S,W)),
$$
as in (\ref{FJ}).
By Lemma \ref{product},
$$F_S(\tau,u)=\sum_{\xi\in \Xi(S)}F_{S,\xi}(\tau) \theta^S_{\varphi_\xi}(\tau,u),\quad
F_{S,\xi}(\tau)=\sum_{N\in \Z \atop N-\sigma_S(\xi,\xi)\ge 0}A_F(S_{\xi,N})\bold{e}((N-\sigma_S(\xi,\xi))\tau),
$$
where $S_{\xi,N}=\left(\begin{array}{cc}
S & S\xi \\
{}^\star \xi S & N
\end{array}\right)$. The function $F_{S,\xi}$ will be called $(S,\xi)$-component of $F$.
\end{remark}

Recall the following definition from \cite{Ik1, Ik2} .
\begin{defin}\label{family} For a sufficiently large $k_0$, a compatible family of Eisenstein series is a family of elliptic modular forms, for even
integer $k'\geq k_0$,
$$g_{k'}(\tau)=b_{k'}(0)+\sum_{N\in\Q_{>0}} N^{\frac{k'-1}{2}} b_{k'}(N) q^N,\ q=\bold{e}(\tau),
$$
satisfying the following three conditions:
\begin{enumerate}
\item $g_{k'}\in\mathcal V(E^1_{k'})$ for all $k'\geq k_0$
\item for each $N\in\Q_+^\times$, there exists $\Phi_N\in\mathcal R$ such that $b_{k'}(N)=\Phi_N(\{p^{\frac{k'-1}{2}}\}_p)$.
\item there exists a congruence subgroup $\Gamma\subset SL_2(\Z)$ such that $g_{k'}\in M_{k'}(\Gamma)$ for all $k'\geq k_0$.
Here $ M_{k'}(\Gamma)$ stands for the space of elliptic modular forms of weight $k$ with respect to $\Gamma$.
\end{enumerate}
\end{defin}

The following theorem plays a key role  in the proof of Theorem \ref{main-thm1}:

\begin{theorem}\label{eisen}
Keep the notations above.
Let $E_{\ell(k)}$ be the Siegel Eisenstein series in Section 1. Assume $S\in L'_+$.
Then any $(S,\xi)$-component of $E_{\ell(k)}$ is an Eisenstein series of weight $k-\frac{1}{2}{\rm dim}(X)$.
\end{theorem}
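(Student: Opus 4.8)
The plan is to combine Lemmas \ref{FJC} and \ref{product} with an unfolding of $E_{\ell(k)}$, reducing everything to one assertion — that the theta components of $(E_{\ell(k)})_S$ carry no cuspidal part — and then to settle that by recognizing $(E_{\ell(k)})_S$ as a Jacobi Eisenstein series. By Lemma \ref{FJC}, for $S\in L'_+$ the Fourier--Jacobi coefficient $(E_{\ell(k)})_S$ lies in $J_{\ell(k),S}(\Gamma_J)$, so Lemma \ref{product} gives
$$(E_{\ell(k)})_S(\tau,u)=\sum_{\xi\in\Xi(S)}(E_{\ell(k)})_{S,\xi}(\tau)\,\theta^S_{\varphi_\xi}(\tau,u),$$
with each $(E_{\ell(k)})_{S,\xi}$ an elliptic modular form of weight $\ell(k)-\frac12{\rm dim}(X)$; it remains to place these components in the Eisenstein subspace.

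First I would show $(E_{\ell(k)})_S$ is itself a Jacobi Eisenstein series. Writing $Z=\left(\begin{array}{cc}W&u\\ v&\tau\end{array}\right)$ in $E_{\ell(k)}(Z)=\sum_{\gamma\in\Gamma_\infty\backslash\Gamma}j(\gamma,Z)^{-\ell(k)}$, one extracts the coefficient of $\bold{e}((S,W))$ by a standard unfolding: the sum over $\Gamma_\infty\backslash\Gamma$ breaks into pieces according to a maximal parabolic $Q\subset G$, all but the ``big cell'' piece produce Fourier--Jacobi coefficients supported on degenerate (non full-rank) indices and hence vanish against $\bold{e}((S,W))$ once $S\in L'_+$, and the big cell, unfolded and integrated over the $W$-lattice, reassembles — via the Heisenberg relations and the $SL_2$- (resp. $U_1$-) action of Section 3 — into a Jacobi Eisenstein series of weight $\ell(k)$ and index $S$ for $\Gamma_J$. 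This is the analogue in our setting of the classical fact (see \cite{Ik1}) that Fourier--Jacobi coefficients of Siegel Eisenstein series are Jacobi Eisenstein series.

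Granting this, I would apply the theta decomposition of Lemma \ref{product} to that Jacobi Eisenstein series. The vector with coordinates $(E_{\ell(k)})_{S,\xi}$, $\xi\in\Xi(S)$, transforms under $SL_2(\Z)$ — resp. under its metaplectic cover when $G=Sp_{4n}$, the weight $k+\frac12$ being half-integral — through the Weil representation $\rho_S$ of the finite quadratic module $\big(\widetilde{\Lambda}(S)/\Lambda,\sigma_S\big)$, and under this correspondence a Jacobi Eisenstein series passes to the vector-valued Eisenstein series of $\rho_S$ (the rank-one case being Eichler--Zagier, the matrix-index case being treated as in \cite{Krieg}). Since a single coordinate of a vector-valued Eisenstein series is an Eisenstein series for the congruence subgroup trivializing $\rho_S$, this is the claim. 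A more computational alternative is to feed the closed formula for $A(T)$ from Section 1 into Remark \ref{important-rmk}: the Fourier coefficients of $(E_{\ell(k)})_{S,\xi}$ are then the values $A(S_{\xi,N})$, visibly products of Dirichlet $L$-values — or of Cohen's numbers when $G=Sp_{4n}$ — with local densities, and these are exactly the Fourier coefficients of the pertinent classical, resp. Cohen--Eisenstein, series.

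The hard part will be the unfolding of the second paragraph: pinning down exactly which cosets in $\Gamma_\infty\backslash\Gamma$ feed a positive-definite Fourier--Jacobi index and checking that they repackage cleanly, with no leftover cuspidal or lower-rank pieces. This coset analysis is delicate, and it is heavier for $G=E_{7,3}$, where $N$ and the Heisenberg group $V$ must be handled through the exceptional Jordan algebra rather than through matrices; for $G=Sp_{4n}$ there is the further burden of carrying everything out over the metaplectic cover of $SL_2$. Once the unfolding is established, the descent to elliptic Eisenstein series through Lemma \ref{product} is formal.
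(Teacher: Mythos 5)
First, note that the paper itself offers no proof of Theorem \ref{eisen}: immediately after the statement it attributes the result to B\"ocherer \cite{bocherer} (classical language, for $Sp_{4n}$), to Ikeda \cite{Ik3} (Weil representation, adelic language), and to \cite{Yamana}, \cite{KY} for the remaining groups, adding that for $E_{7,3}$ ``the proof is not a routine at all.'' Your outline follows the same route as those references --- unfold $E_{\ell(k)}$ to place $(E_{\ell(k)})_S$ in the Eisenstein part of $J_{\ell(k),S}(\Gamma_J)$, then pass through the theta decomposition of Lemma \ref{product} and the representation $u_S$ --- so the strategy is the right one, and your reduction (modularity of the components is already Lemma \ref{product}; what remains is exactly the absence of a cuspidal part; a coordinate of a vector-valued Eisenstein series for $u_S$ is an Eisenstein series for $\Gamma(\Delta_S)$) is correct.

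The genuine gap is that the one step carrying all the content --- your second paragraph --- is asserted rather than proved, and as stated it is not quite what is true. Already for $Sp_4$ the $m$-th Fourier--Jacobi coefficient of the Siegel Eisenstein series is not the Jacobi Eisenstein series of index $m$; it is a linear combination of images of Jacobi Eisenstein series of smaller index under index-raising operators (Eichler--Zagier, B\"ocherer). So ``the big cell reassembles into a Jacobi Eisenstein series of index $S$'' must at best be weakened to ``$(E_{\ell(k)})_S$ lies in the span of Jacobi Eisenstein series,'' and proving that span statement is exactly where B\"ocherer's ``many complicated terms'' and Ikeda's adelic Weil-representation computation live: one must run through the orbits of the Jacobi group on $P(\Q)\backslash G(\Q)$, show the non-open orbits feed only degenerate indices, and evaluate the open-orbit contribution via local Whittaker-type integrals. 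None of this is supplied, and for $E_{7,3}$ there is no matrix model to fall back on, which is precisely why the paper flags that case as non-routine. Your ``computational alternative'' does not bypass this either: identifying $\prod_p\widetilde{F}_{p}(S_{\xi,N};p^{k-\frac{1}{2}})$ with the local factors of the Fourier coefficients of a specific (Cohen--)Eisenstein series is the hard local-density identity, i.e., the same theorem in Fourier-coefficient clothing. In short: correct roadmap, matching the cited proofs in outline, but the theorem's content sits entirely inside the step you have labelled ``the hard part.''
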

This theorem was first proved by B\"ocherer \cite{bocherer} for $G=Sp_{4n}$ in the classical language.
However the proof there involves many complicated terms and seems difficult to read off what we need.
More sophisticated proof was given by Ikeda \cite{Ik3}. He made a good use of Weil representation and
worked over the adelic language. In \cite{Yamana}, \cite{KY}, the authors followed his method.
However in case $E_{7,3}$, the group structure is much more complicated than others. So the proof
is not a routine at all.

The following Lemma 10.2 of \cite{Ik2} is a crucial ingredient.
\begin{theorem}\label{ikeda-lem} Let $f(\tau)=\ds\sum_{n=1}^\infty c(n)q^n$ be a Hecke eigenform of weight $k$ with respect to $SL_2(\Z)$ with
$c(p)=p^{\frac{k-1}{2}}(\alpha_p+\alpha^{-1}_p)$.
Assume that there is a finite dimensional representation $(u,\C^d)$ of $SL_2(\Z)$ and
$$\vec{\Phi}_N:={}^t(\Phi_{1,N},\ldots,\Phi_{d,N})\in\mathcal R^d,\ N\in\Q_{>0}$$ satisfying the following two conditions:
\begin{enumerate}
\item there exists a vector valued modular form $\vec{g}_{k'}={}^t(g_{1,k'},\ldots,g_{d,k'})$ which has
$$\vec{g}_{k'}(\tau)=\vec{b}_{k'}(0)+\sum_{N\in\Q_{>0}} N^{\frac{k'-1}{2}} \vec{b}_{k'}(N) q^n,\  (\vec{b}_{k'}(N)=
{}^t(b_{1,k'}(N),\ldots,b_{d,k'}(N)),\ N\in \Q_{\ge 0})$$ of weight $k'$ with type $u$
for each sufficiently large even integers $k'$, hence this means that
$$\vec{g}_{k'}(\tau)|_{k'}[\gamma]:={}^t(g_{1,k'}|_{k'}[\gamma],\ldots,g_{d,k'}|_{k'}[\gamma])=
u(\gamma)\vec{g}_{k}(\tau)\ {\rm for\ any}\ \gamma\in SL_2(\Z),$$
\item each component $g_{i,k'},\ (1\le i\le d)$ of $\vec{g}_{k'}(\tau)$ is a compatible family of Eisenstein series such that $$b_{i,k'}(N)=\Phi_{i,N}(\{p^{\frac{k'-1}{2}}\}_p).$$
\end{enumerate}
Then $\vec{h}(\tau):=\ds\sum_{N\in\Q_{>0}} N^{\frac{k-1}{2}} \vec{\Phi}_{N}(\{\alpha_p\}_p) q^N$ is
a vector valued modular form of weight $k$ with type $u$, hence it satisfies
$$\vec{h}(\tau)|_{k}[\gamma]=u(\gamma)\vec{h}\ {\rm for\ any}\ \gamma\in SL_2(\Z).$$
\end{theorem}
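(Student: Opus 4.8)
The plan is to reduce the claimed vector-valued modularity of $\vec h$ to a single functional equation for an associated Dirichlet series, and then to obtain that functional equation from the ones already known for the family $\{\vec g_{k'}\}$ by interpolating in the weight. For the reduction, recall that $SL_2(\Z)$ is generated by $\tau\mapsto\tau+1$ and the map $S$ given by $\tau\mapsto-1/\tau$. Since $\vec h$ and the $\vec g_{k'}$ share the same Fourier support (only the substitution of the $p$-parameters differs), the transformation law under $\tau\mapsto\tau+1$ is inherited from the $\vec g_{k'}$, and holomorphy of $\vec h$ on $\mathbb H$ follows from convergence of its $q$-series. So it suffices to check $\vec h|_k[S]=u(S)\vec h$ together with moderate growth of the coefficients $c_{\vec h}(N):=N^{\frac{k-1}{2}}\vec\Phi_N(\{\alpha_p\}_p)$, $N\in\Q_{>0}$. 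The growth follows from $|\alpha_p|=1$ (Deligne's bound for $f$) and the explicit Siegel-series shape of the $\Phi_{i,N}$: each $\Phi_{i,N}$ is a Laurent polynomial in the finitely many variables $X_p$ with $p\mid N$, of degree and coefficients bounded polynomially in $N$, whence $c_{\vec h}(N)=O(N^C)$ for some $C$. Setting $\Lambda_{\vec h}(s)=(2\pi)^{-s}\Gamma(s)\sum_{N\in\Q_{>0}}c_{\vec h}(N)N^{-s}$, the $\C^d$-valued Hecke converse theorem for $SL_2(\Z)$ — which needs only the one $S$-functional equation, since $S$ and $\tau\mapsto\tau+1$ generate $SL_2(\Z)$, and which goes through verbatim for a type $u$ of finite image — then gives $\vec h\in M_k(SL_2(\Z),u)$ once $\Lambda_{\vec h}$ is shown to continue meromorphically, to be bounded in vertical strips, and to satisfy $\Lambda_{\vec h}(s)=\e_k\,u(S)\,\Lambda_{\vec h}(k-s)$ for the root number $\e_k$ attached to weight $k$.

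To produce this functional equation I would introduce the ``master'' Dirichlet series $L(s;w,\{x_p\}_p):=\sum_{N\in\Q_{>0}}N^{\frac{w-1}{2}}\vec\Phi_N(\{x_p\}_p)N^{-s}$, so that $L_{\vec h}(s)=L(s;k,\{\alpha_p\}_p)$ while the Dirichlet series of $\vec g_{k'}$ is $L(s;k',\{p^{\frac{k'-1}{2}}\}_p)$. The force of the hypothesis $\vec\Phi_N\in\mathcal R^d$, $\mathcal R=\otimes'_p\C[X_p,X^{-1}_p]$, is that the collection $\{\vec\Phi_N\}_N$ is compatible with an Euler product; a local--global computation of the relevant Eisenstein-series generating function — for a single Fourier--Jacobi building block this is the mechanism behind the factorizations of Theorem \ref{main-thm2}, and it rests on the Siegel-series identities underlying Theorem \ref{eisen} — exhibits each component of $L(s;w,\{x_p\}_p)$ as a finite product of standard factors: shifted Riemann zeta functions $\zeta(s-a_j(w))$ independent of $\{x_p\}$, and ``symmetric-power-type'' twists $\prod_p\prod_{\pm}\bigl(1-x_p^{\pm1}p^{b_j(w)-s}\bigr)^{-1}$, with $a_j,b_j$ explicit functions of $w$. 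Under $x_p=p^{\frac{w-1}{2}}$ each twist degenerates into a further product of shifted zetas — which is exactly why the $\vec g_{k'}$, being assembled from Eisenstein series, are modular — while under $x_p=\alpha_p$ the twists become genuine $L(s+*,f)$, $L(s+*,f,\chi)$, or $L(s,{\rm Sym}^3\pi_f)$. In all cases $L(s;w,\{x_p\}_p)$ continues meromorphically, is bounded in vertical strips (and, with $x_p=\alpha_p$ and $f$ cuspidal, $\Lambda_{\vec h}$ is holomorphic), and has a functional equation under $s\mapsto w-s$ whose gamma factor and root number are products of the standard ones, hence elementary functions of $w$.

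The interpolation step is then as follows. By hypothesis $\vec g_{k'}$ is modular of weight $k'$ and type $u$ for every large even $k'$, which by the reduction above is equivalent to $\Lambda(s;k',\{p^{\frac{k'-1}{2}}\}_p)=\e_{k'}\,u(S)\,\Lambda(k'-s;k',\{p^{\frac{k'-1}{2}}\}_p)$ for all such $k'$. Comparing this with the factorization, the ratio of the gamma factor and root number actually produced by the standard-$L$-function factorization to the prediction forced by weight-$w$, type-$u$ modularity (root number $\e_w\,u(S)$ and the weight-$w$ gamma factor) is a single meromorphic function of $w$, built from powers of $\pi$ and $i$, quotients of $\Gamma$-values at arguments affine in $s$ and $w$, and the explicit constants in the functional equations of $\zeta$ and of $L(s,f)$. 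It equals $1$ for all large even $w$, hence identically; specializing to $w=k$ and $x_p=\alpha_p$ yields the functional equation required for $\Lambda_{\vec h}$, and the converse theorem finishes the proof. For $G=Sp_{4n}$ one runs the identical argument with half-integral weight $k+\frac12$ in place of $k$: the $g_{k'}$ lie in Kohnen's plus space on $\Gamma_0(4)$, the automorphy factor carries the theta multiplier of the metaplectic cover, and the converse theorem and the gamma-factor bookkeeping are the half-integral ones, but nothing structural changes.

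The main obstacle is precisely these interpolation inputs. One must pin down the ring $\mathcal R$ and prove that the $\vec\Phi_N$ really do assemble into Dirichlet series with the asserted factorization into shifted zeta functions and symmetric-power twists — a local computation of the generating functions of Siegel (or Jacobi) Eisenstein-series Fourier coefficients — and then carry out the elementary but fussy check that the resulting gamma factors and root numbers, viewed as functions of the weight, agree with those forced by weight-$w$, type-$u$ modularity throughout the range $k'\ge k_0$, so that the identity can be continued down to the single value $w=k$. A purely algebraic alternative — regarding $\bigoplus_w M_w(SL_2(\Z),u)$ as a finitely generated graded module over $\C[E_4,E_6]$ and interpolating the coordinates of $\vec g_{k'}$ in such a presentation — looks more treacherous, since $\dim M_w(SL_2(\Z),u)$ is not polynomial in $w$ (it jumps), so one cannot naively interpolate down to a small weight $k<k_0$; the functional-equation formulation is designed precisely to evade this, a functional equation being a ``stable'' statement insensitive to such jumps. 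Finally, for $E_{7,3}$ the local computation is much heavier because of the structure of the exceptional Jordan algebra, and the symmetric-cube factor of Theorem \ref{main-thm2} must be carried through it.
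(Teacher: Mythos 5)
The paper gives no proof of this statement---it is quoted as Lemma 10.2 of \cite{Ik2}---so your argument has to stand on its own, and it has a gap at its center. Everything funnels through the claim that the ``master'' Dirichlet series $L(s;w,\{x_p\}_p)=\sum_N N^{\frac{w-1}{2}}\vec{\Phi}_N(\{x_p\}_p)N^{-s}$ factors, for \emph{arbitrary} unimodular $\{x_p\}_p$, into shifted zeta functions and twists that specialize at $x_p=\alpha_p$ to standard automorphic $L$-functions of $f$, with meromorphic continuation, boundedness in vertical strips, and a functional equation under $s\mapsto w-s$ whose archimedean factors vary with $w$ in a controlled way. None of this follows from the hypotheses: conditions (1)--(2) constrain only the evaluations of $\vec{\Phi}_N$ at the points $\{p^{\frac{k'-1}{2}}\}_p$ and say nothing about an Euler product or about the analytic behaviour of the series elsewhere on the torus. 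In effect you are importing the conclusion of Theorem~\ref{main-thm2} (whose proof presupposes Theorem~\ref{main-thm1}, hence this very lemma) together with the full analytic theory of $L(s,{\rm Sym}^3\pi_f)$ in order to prove an interpolation lemma whose purpose is to avoid exactly that. Two subsidiary steps are also unsound as written: a meromorphic function of $w$ equal to $1$ at all large even integers need not be identically $1$ (e.g.\ $e^{2\pi i w}$), so the gamma-factor/root-number comparison requires the precise shape of the ratio, which you do not possess without the unproven factorization; and you never invoke condition (1) of Definition~\ref{family}, namely $g_{k'}\in\mathcal V(E^1_{k'})$, which is not decorative---it is the engine of the actual proof.

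Ikeda's argument is algebraic where yours is analytic. Membership in $\mathcal V(E^1_{k'})$, the span of elementary translates of a single explicit Eisenstein series, guarantees that every ingredient of the transformation law $\vec{g}_{k'}|_{k'}[\gamma]=u(\gamma)\vec{g}_{k'}$---the Fourier expansions at all cusps, the multiplier $u_S(\gamma)$, the Gauss sums---is the value at $\{p^{\frac{k'-1}{2}}\}_p$ of a fixed element of $\mathcal R$ not depending on $k'$. Distinct monomials of $\mathcal R$ evaluate at these points to $m^{\frac{k'-1}{2}}$ for distinct positive rationals $m$, so an identity verified for infinitely many $k'$ holds identically in $\mathcal R$ and can then be specialized at $X_p=\alpha_p$; under that substitution $E^1_{k'}$ itself becomes a form built directly from $f$ (its Shimura--Kohnen lift in the $Sp_{4n}$ case), which is where the weight-$k$ modularity of $\vec{h}$ actually comes from. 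To rescue your route you would have to establish the factorization and functional equations from scratch for each $G$, which is strictly harder than the lemma you are trying to prove.
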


\section{Proof of Theorem \ref{main-thm1} and \ref{main-thm2}}
Recall that for each normalized Hecke eigenform $f=\ds\sum_{n=1}^\infty a(n)q^n\in S_{2k}(SL_2(\Z))$, we have considered the following formal series on $\D$:
$$F_f(Z):=\sum_{T\in L^+} A_{F_f}(T)\exp(2\pi\sqrt{-1}\tr_G(TZ)),\ Z\in \frak D, \quad
A_{F_f}(T)=A(\{\alpha_p\}_p).
$$
The first task is to check the absolute convergence for $F_f$:
This is done by using explicit formula of Fourier coefficients of Siegel Eisenstein series and
Ramanujan bound $|a(p)|\le 2p^{k-\frac{1}{2}}$.

Next, we use the fact that $\Gamma=G(\Bbb Z)$ is generated by $P(\Bbb Z)$ and $H(\Bbb Z)$, where $H$ is in (\ref{SL}) or (\ref{SL-1}).
We can easily check, by property of Fourier coefficients of Siegel Eisenstein series, that
$F_f$ is invariant under the action of $P(\Bbb Z)$.
Therefore to prove the automorphy of $F_f$, we have to check only the invariance of $F_f$ under the action of $H(\Bbb Z)$. For this, we need to use the Fourier-Jacobi expansion.

To unify notation we write the Fourier coefficient of $F_f$ as
$A_{F_f}(T)=C_1(T)C_2(T)^{k-\frac{1}{2}}\prod_{p}\widetilde{F}_{p}(T;\alpha_p)$ for $T\in L^+$ where $C_1(T)=L(1-k,\chi_T)$ if $G=Sp_{4n}$,
$C_1(T)=1$ otherwise, and other terms should be clear from the definition as in the introduction.
 Since $F(Z):=F_f(Z)$ is invariant under $P(\Bbb Z)$, by
Remark \ref{important-rmk}, one has the Fourier-Jacobi expansion:
\begin{equation*}\label{FJ1}
F\left(\begin{array}{cc}
W & u \\
v & \tau
\end{array}\right)=\sum_{S\in  L'_+}F_S(\tau,u)\bold{e}(\tr_G(SW)), \quad
F_S(\tau,u)=\sum_{\xi\in \Xi(S)}F_{S,\xi}(\tau) \theta^S_{\varphi_\xi}(\tau,u),
\end{equation*}
and
\begin{equation*}\label{FJ3}
\begin{array}{ll}
F_{S,\xi}(\tau)&=\ds\sum_{N\in \Z \atop N-\sigma_S(\xi,\xi)\ge 0}
A_F(S_{\xi,N})\bold{e}((N-\sigma_S(\xi,\xi))\tau),\ S_{\xi,N}:=\left(\begin{array}{cc}
S & S\xi \\
\eta S & N
\end{array}\right) \\
&=\ds\sum_{N\in \Z \atop N-\sigma_S(\xi,\xi)\ge 0}C_1(S_{\xi,N})C_2(S_{\xi,N})^{k-\frac{1}{2}}
\prod_p \widetilde{F}_{p}(S_{\xi,N};\alpha_p)\bold{e}((N-\sigma_S(\xi,\xi))\tau)\\
&=D(S)^{k-\frac{1}{2}}\ds\sum_{N\in \Z \atop N-\sigma_S(\xi,\xi)\ge 0}C_1(S_{\xi,N})(N-\sigma_S(\xi,\xi))^{k-\frac{1}{2}}
\prod_p\widetilde{F}_{p}(S_{\xi,N};\alpha_p)\bold{e}((N-\sigma_S(\xi,\xi))\tau)
\end{array}
\end{equation*}
where there exists the constant $D(S)$ depending only on $S$ such that $C_2(S_{\xi,N})=D(S)(N-\sigma_S(\xi,\xi))$.
The invariance under $H(\Z)$ is equivalent to claiming that $F_S(\tau,u)\in J_{k,S}(\Gamma_J)$ for any $S\in L'_+$.

By (2,1), p.124 of \cite{takase}, for each $\gamma\in SL_2(\Z)$, there exists a unitary matrix $u_S(\gamma)=
(u_S(\gamma)_{\xi \eta})_{\xi,\eta\in \Xi(S)}$ such that
\begin{equation*}\label{inv2}
\theta^S_{\varphi_\xi}|_{k,S}[\gamma](\tau,u)=\sum_{\eta\in \Xi(S)}u_S(\gamma)_{\xi\eta}\theta^S_{\varphi_\eta}(\tau,u).
\end{equation*}
Further there exists a positive integer $\Delta_S$ depending on $S$ such that $u_S$ is trivial on $\Gamma(\Delta_S)\subset SL_2(\Z)$.
Since $\{ \theta^S_{\varphi_\xi}\ |\  \xi\in \Xi(S) \}$ are linearly independent over $\C$,
it suffices to prove that $\{F_{S,\xi}\}_{\xi\in \Xi(S)}$ is a vector
valued modular form with type $u_S$.

For a sufficiently large positive integer $k'$,
we now turn to consider $(S,\xi)$-component $(\mathcal{E}_{\ell(k')})_{S,\xi}$ of the classical
Eisenstein series
$$ \mathcal{E}_{\ell(k')}(Z)=\sum_{T\in L}A(T)\exp(2\pi\sqrt{-1}\cdot \tr_G(TZ)),\
A(T)=C_1(T)C_2(T)^{k'-\frac{1}{2}}\prod_{p}\widetilde{F}_{p}(T;p^{k'-\frac{1}{2}}),$$
Then one has
$$
\begin{array}{l}
D(S)^{-k'+\frac{1}{2}}(\mathcal{E}_{\ell(k')})_{S,\xi}(\tau)\\
=
\ds\sum_{N\in \Z \atop N-\sigma_S(\xi,\xi)\ge 0}C_1(S_{\xi,N})(N-\sigma_S(\xi,\xi))^{-k'+\frac{1}{2}}
\prod_{p|\det(S_{\xi,N})}\widetilde{F}_{p}(S_{\xi,N};p^{k'-\frac{1}{2}})\bold{e}((N-\sigma_S(\xi,\xi))\tau)\\
\end{array}
$$
Then by Theorem \ref{eisen},
$\{D(S)^{-k'+\frac{1}{2}}(\mathcal{E}_{\ell(k')})_{S,\xi}\}_{k'\gg0}$ makes up a compatible family of Eisenstein series
in the sense of Ikeda (see Section 10 of \cite{Ik1} for $G=Sp_{4n}$ and Section 7 of \cite{Ik2} for other cases).
Applying Lemma \ref{ikeda-lem}, one can conclude that
$$F_{S,\xi}=D(S)^{k-\frac{1}{2}}\ds\sum_{n\in\Z_{>0}\atop
n=N-\sigma_S(\xi,\xi),\ N\in \Z}C_1(S_{\xi,N}) n^{k-\frac{1}{2}} \prod_{p|\det(S_{\xi,N})}\widetilde{F}_{p}(S_{\xi,N};p^{k'-\frac{1}{2}}) q^n,
$$
is a vector valued modular form with type $u_S$. The non-vanishing is easy to check except for $Sp_{4n}$.
In this case, a bit of careful study was needed (see p.651 of \cite{Ik1}). At any late one can prove the non-vanishing of $F_f$.

Since we know Satake parameters of $\pi_F$, it is easy to compute $L(s,\pi_F,St)$.
For $G=E_{7,3}$, we can use the Langlands-Shahidi method for the case $GE_7\subset E_8$ (cf. \cite{kim1}, section 2.7.8).

\section{Some conjectures and problems}
In this section we are concerning with some conjectures and problems related to the results in \cite{KY}.

\subsection{Conjectural Arthur parameter} It is worth considering the compatibility with Arthur conjecture in the case $E_{7,3}$:
We write the degree 56 standard $L$-function of $F:=F_f$ as
$$L(s,\pi_F,St)=L(s,Sym^3 \pi_f)\prod_{i=-4}^4 L(s+i,\pi_f) \prod_{i=-8}^8 L(s+i,\pi_f).
$$
This suggests the following parametrization of $\pi_F$:

Let $\mathcal L$ be the (hypothetical) Langlands group over $\Q$, and let $\rho_f : \mathcal L\lra SL_2(\C)$ be the 2-dimensional irreducible representation of $\mathcal L$ corresponding to $\pi_f$.
Let $Sym^n$ be the irreducible $(n+1)$-dimensional representation of $SL_2(\C)$. Note that if $n=2m-1$, $Im(Sym^n)\subset Sp_{2m}(\C)$, and if $n=2m$, $Im(Sym^n)\subset SO_{2m+1}(\C)$.  We have the tensor product maps $SL_2(\C)\times Sp_{2m}(\C)\lra Sp_{4m}(\C)$ and $SL_2(\C)\times SO_{2m+1}(\C)\lra Sp_{4m+2}(\C)$. Hence

$\rho_f\otimes Sym^{16}: \mathcal L\times SL_2(\Bbb C)\longrightarrow Sp_{34}(\Bbb C)$, and
$\rho_f\otimes Sym^{8}: \mathcal L\times SL_2(\Bbb C)\longrightarrow Sp_{18}(\Bbb C)$.

\noindent Let $Sym^3\rho_f: \mathcal L\times SL_2(\Bbb C)\longrightarrow Sp_4(\Bbb C)$ be the parameter of $Sym^3\pi_f$, where it is trivial on $SL_2(\Bbb C)$. Consider the parameter
$$
\rho=Sym^3\rho_f\oplus (\rho_f\otimes Sym^{16})\oplus (\rho_f\otimes Sym^{8}) : \mathcal L\times SL_2(\Bbb C)\longrightarrow
Sp_4(\Bbb C)\times Sp_{34}(\Bbb C)\times Sp_{18}(\Bbb C)\subset Sp_{56}(\Bbb C).
$$

Note that $E_7(\Bbb C)\subset Sp_{56}(\Bbb C)$. We expect that $\rho$ will factor through $E_7(\Bbb C)$, and give rise to a parameter
$\rho: \mathcal L\times SL_2(\Bbb C)\longrightarrow
E_7(\Bbb C)$, which parametrizes $\pi_F$.

\subsection{Ikeda lift as CAP form} If $G=Sp_{4n}$, the Ikeda lift $F_f$ is a CAP form. Namely, $\pi_F$ is nearly equivalent to the quotient of the induced representation
$$Ind_{P_{2,..,2}}^{Sp_{4n}} \ \pi_f|det|^{n-\frac 12}\otimes\pi_f|det|^{n-\frac 32}\otimes\cdots\otimes \pi_f|det|^{\frac 12},
$$
where $P_{2,...,2}$ is the standard parabolic subgroup of $Sp_{4n}$ with the Levi subgroup $GL_2\times\cdots\times GL_2$ 
($n$ factors) (see also p.114 of \cite{Ik0}).

If $G=E_{7,3}$, $\pi_F$ cannot be a CAP form in a usual sense since there are not many $\Q$-parabolic subgroups of $E_{7,3}$.
We expect that $\pi_F$ will be a CAP form in a more general sense: Namely, there exists a parabolic subgroup
$Q=M'N'$ of the split $E_7$, and a cuspidal representation $\tau=\otimes_p' \tau_p$ of $M'$, and a parameter $\Lambda_0$
such that
for all finite prime $p$, $\pi_p$ is a quotient of $Ind_{Q(\Q_p)}^{E_7(\Q_p)} \ \tau_p\otimes exp(\Lambda_0,H_Q(\ )).$

\subsection{Miyawaki type lift to $GSpin(2,10)$} This work is in progress \cite{KY1}.
For $Z\in \frak D_2$, let $\begin{pmatrix} Z&0\\0&\tau\end{pmatrix}\in \frak D$.
For $f\in S_{2k}(SL_2(\Z))$, let $F$ be the Ikeda lift of $f$, which is a cusp form of weight $2k+8$ on $\frak D$.
For $h\in S_{2k+8}(SL_2(\Z))$, consider the integral

$$\mathcal F_{f,h}(Z)=\int_{SL_2(\Z)\backslash \Bbb H} F\begin{pmatrix} Z&0\\0&\tau\end{pmatrix} \overline{h(\tau)} (Im \tau)^{2k+6}\, d\tau.
$$

When $\mathcal F_{f,h}$ is not zero, it is a cusp form of weight $2k+8$ on $\frak D_2$. It is expected that $\mathcal F_{f,h}$ is a Hecke eigen form, and it would give rise to a cuspidal representation $\pi_{\mathcal F_{f,h}}$ on $GSpin(2,10)$: Let 
$\pi_{\mathcal F_{f,h}}=\pi_\infty\otimes \otimes_p' \pi_p$.
Let $\{\alpha_p, \alpha_p^{-1}\}$ and $\{\beta_p, \beta_p^{-1}\}$ be the Satake parameter of $f$ and $h$ at the prime $p$, resp. Then for each prime $p$, it is expected that the Satake parameter of $\pi_p$ is
$$\{(\beta_p\alpha_p)^{\pm 1}, (\beta_p\alpha_p^{-1})^{\pm 1}, 1, 1, p^{\pm 1},  p^{\pm 2} , p^{\pm 3}\}.
$$

Then the standard $L$-function of $\pi_{\mathcal F_{f,h}}$ is
$$L(s, \pi_{\mathcal F_{f,h}}, St)=L(s,h\times f)\zeta(s)^2 \zeta(s\pm 1)\zeta(s\pm 2)\zeta(s\pm 3),
$$
where the first factor is the Rankin-Selberg $L$-function.
This can be explained by Arthur parameter as follows: Let $\phi_f, \phi_h: \mathcal L\lra SL_2(\C)$ be the hypothetical Langlands parameter. Then due to the tensor product map $SL_2(\C)\times SL_2(\C)\lra SO_4(\C)$, we have
$\phi_f\otimes\phi_h: \mathcal L\lra SO_4(\Bbb C).$ The distinguished unipotent orbit $(7,1)$ of $SO_8(\C)$ gives rise to the map 
$SL_2(\C)\lra SO_8(\C)$. It defines the map 
$\phi_u: \mathcal L\times SL_2(\Bbb C)\lra SO(8,\Bbb C)$. Then consider
$$\phi=(\phi_h\otimes \phi_f)\oplus  \phi_u : \mathcal L\times SL_2(\C)\lra SO_4(\C)\times SO_8(\C)\subset GSO_{12}(\C).
$$
We expect that $\phi$ parametrizes $\pi_{\mathcal F_{f,h}}$.

\subsection{Pertersson formula and its possible application}
In case $E_{7,3}$, it may be interesting to give an explicit formula of
the Petersson inner product formula for $F_f$. (See \cite{ichino-ikeda} for its importance.) Since $L(s,\pi_F,St)$ involves
the third symmetric power $L$-function $L(s, Sym^3 \pi_f)$, we expect to somehow figure out an ``algebraic part" of
$L(s,Sym^3 \pi_f)$.

\end{document}